\documentclass[11pt, a4paper, twoside]{article}
\usepackage{amsmath, amssymb, amscd, amsthm, mathrsfs, url}
\usepackage{amscd, graphicx, caption, subcaption}
\usepackage{wrapfig}
\usepackage{array, enumerate, mathtools}
\usepackage{amssymb, ifpdf, xspace,lipsum}
\usepackage{fancyhdr, float}
\usepackage[left=.75in,top=.75in,right=.75in]{geometry}
\usepackage{hyperref}
\newtheorem{thm}{Theorem}
\theoremstyle{definition}
\newtheorem{defn}{Definition}
\theoremstyle{plain}
\newtheorem{cor}[thm]{Corollary}
\newtheorem{lemma}[thm]{Lemma}
\newtheorem{prop}[thm]{Proposition}
\newtheorem{conj}[thm]{Conjecture}
\theoremstyle{definition}

\newtheoremstyle{example}{\topsep}{\topsep}%
 {}%         Body font
 {}%         Indent amount (empty = no indent, \parindent = para indent)
 {\bfseries}% Thm head font
 {.}%        Punctuation after thm head
 { }%\newline}%     Space after thm head (\newline = linebreak)
 {}%\thmname{#1}}%         Thm head spec

\theoremstyle{example}
%[subsection]
\newcommand{\ds}[1]{\displaystyle{#1}}

\newcommand{\N}{\mathbb{N}}

\newcommand{\Z}{\mathbb{Z}}

\newcommand{\cC}{\mathcal{C}}

\newcommand{\inedges}{\mathrm{in}}
\newcommand{\idf}{Id_{\mathrm{f}}}
\newcommand{\idr}{Id_{\mathrm{r}}}

\pagestyle{fancy}

\begin{document}
\title{The Abelian Sandpile Model on Fractal Graphs}
\author{Samantha Fairchild\thanks{Research supported by the National Science Foundation through the Research Experiences for Undergraduates Programs at Cornell, grant DMS-1156350}$\,\,^1$,
Ilse Haim\footnotemark[1]$\,\,^2$,
Rafael G.~Setra\footnotemark[1]$\,\,^3$,\\
Robert S.~Strichartz\footnote{Research supported in part by the National Science Foundation, grant DMS-1162045}$\,\,^4$, and Travis Westura$^5$}

\date{}

\maketitle

\vspace{-0.5cm}

\begin{center}
\begin{footnotesize}

$^1\!\!$
\textit{Mathematics Department, Houghton College, Houghton NY 14744}\\
\textit{Current: Mathematics Department, University of Washington, Seattle WA 98125}\\
\textit{samantha.fairchild15@houghton.edu, skayf@uw.edu}

\smallskip

$^2\!\!$
\textit{Department of Mathematics, University of Maryland, College Park MD 20742}\\
\textit{Current: Google Inc., 1600 Amphitheatre Parkway, Mountain View CA 94043}\\
\textit{ilsehaim11@gmail.com}

\smallskip

$^3\!\!$
\textit{Department of Electrical and Computer Engineering, University of Maryland, College Park MD 20742}\\
\textit{Current: Department of Electrical Engineering, Stanford University, Stanford CA 94305}\\
\textit{rafael.g.setra@gmail.com}

\smallskip

$^4\!\!$
\textit{Mathematics Department, Cornell University, Ithaca NY 14852}\\
\textit{str@math.cornell.edu}

\smallskip

$^5\!\!$
\textit{Mathematics Department, Cornell University, Ithaca NY 14852}\\
\textit{tsw52@cornell.edu}

\end{footnotesize}
\end{center}

\begin{abstract}
We study the Abelian sandpile model, a process where stacks of chips are placed on a graph's vertices.
When the number of chips on a vertex is at least the vertex's degree, one chip is distributed from that vertex to each neighboring vertex.
This model has been shown to form fractal patterns on the integer lattice, and using these fractal patterns as motivation, we consider the model on graph approximations of post critically finite (p.c.f) fractals.
We determine asymptotic behavior of the diameter of sites toppled, characterize graphs that exhibit a periodic number of chips with respect to the initial placement, and investigate properties of the Sandpile Group on these graphs.

\end{abstract}

\smallskip
\noindent \textbf{Keywords:} fractal graphs, Abelian sandpile, Sierpinski Gasket, growth model, identity of sandpile group
\newline
AMS Classifications: 05C25, 20F65, 91B62, 05C75

\section{Introduction}

The Abelian Sandpile Model arose as a representation of self-organized criticality, a concept popularized in a 1987 paper by Bak, Tang, and Wiesenfeld \cite{SOC}.
The model associates each vertex of a graph with a number representing the height of a stack of chips placed at that vertex (we use the term ``chips'' for consistency with the terminology ``chip-firing game'', but other terms, e.g.\ sand, grains, or particles, are used throughout the literature).
These sandpile models are called ``abelian'' because an abelian group structure is defined using these chip configurations.
For general reading on this subject, see \cite{Chapter2,CaraccioloASM,DharAlgebraicAspects,MeesterASM, Creutz,DeepakDhar,Antal}.

In this paper we study the asymptotic and periodic patterns produced in the abelian sandpile model when placing $n$~chips on a single vertex~$v_0$ in graph approximations of fractals.
The fractals we examine include the Sierpinski gasket ($SG$, \ref{fig:SG2}), the Hexagasket ($HG$, \ref{fig:hg2mini}), the Mitsubishi gasket ($MG$, \ref{fig:mg2mini}), and the Pentagasket ($PG$, \ref{fig:pg2mini}).
Fey, Levine, and Perez show in \cite{Fey} that starting with $n$~chips on the origin of~$\mathbb{Z}^d$, the diameter of the set of sites that topple has order~$n^{1/d}$.
We extend this idea to fractal approximations by replacing~$d$ with the fractal's Hausdorff dimension.
Furthermore, we show that, under certain conditions, the number of chips on a given vertex~$v$ is periodic with respect to the initial $n$~chips on~$v_0$.
We additionally present a conjectured closed-form formula for the order of the Sandpile Group of an $m$th-level graph approximation of the Sierpinski Gasket and analyze the patterns exhibited by the identity configuration of various fractals.

The rest of this paper is organized as follows: \ref{sec:prelim} provides basic definitions of the sandpile model and establishes the notation used throughout the paper; \ref{sec:bound-grow} examines boundary growth on the Sierpinski Gasket; \ref{sec:period} proves that chip configurations on various fractal graphs exhibit periodic behavior in the presence of certain symmetries; \ref{sec:group-structure} discuss the properties of the Sandpile group for our fractal graphs; and \ref{sec:identities} examines the Sandpile group's identity configuration.

%-------------------------------------------------------------------------------

\section{Preliminaries}\label{sec:prelim}

Let ${G = (V \cup \{s\}, E)}$ be a connected graph of vertices~$V$, edges~$E$, and a distinguished vertex~$s$ called the sink.
A configuration on $G$~is a function ${\eta : V \to \N}$, with the intuitive meaning of placing a stack of chips on each vertex.
A configuration is called stable if for each ${v\in V}$, ${\eta(v) < \deg(v)}$.
If a vertex is unstable, the vertex topples and sends one chip to each of its neighbors.
Chips that land on the sink vertex are removed from the graph.
If multiple vertices must topple, the final configuration is independent of the order in which the vertices topple.
We denote the stable configuration corresponding to~$\eta$ as~$\eta^{\circ}$ and define the odometer function ${u : V \to \N}$ as the number of times that a vertex~$v$ topples throughout the stabilization process.

\begin{wrapfigure}{o}{0.45\textwidth}
 \centering
	\includegraphics[width=0.43\textwidth]{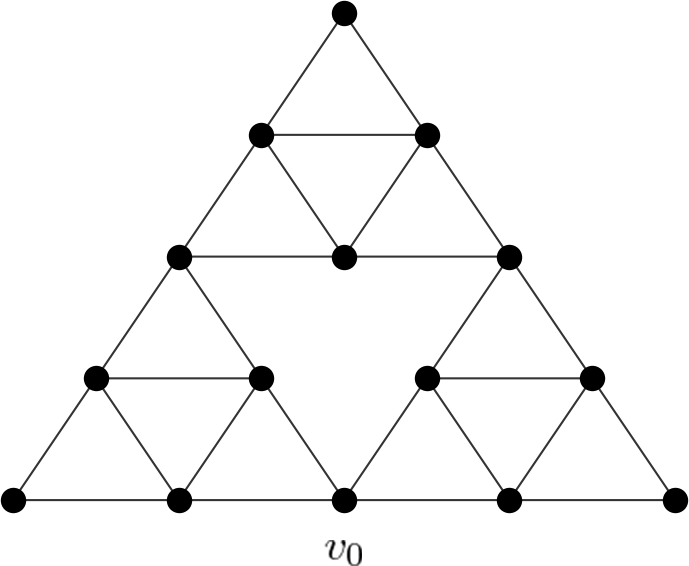}
	\caption{The second level graph approximation of $SG$: $SG_2$.}
	\label{fig:SG2}
	\vspace{-5pt}
\end{wrapfigure}

In \ref{sec:bound-grow} and \ref{sec:period} we consider the ``chip-firing game''.
Here we place chips on a central vertex~$v_0$ and analyze the stabile configuration resulting from toppling this stack of chips.
We use ${d(v_0, v)}$ to denote the distance of vertex~$v$ from~$v_0$ with respect to the graph metric, and we let ${V_{r} = \{ v\in V : d(v_0, v) \leq r\}}$ be the set of vertices at distance at most~$r$ from~$v_0$.

Let~$T_n$ be the set of vertices that topple when placing $n$~chips on an initial vertex~$v_0$.
More precisely, ${T_n = \{v \mid u(v) \ge 1\}}$.
These sets are nested, ${T_1 \subseteq T_2 \subseteq ...}$, and we consider them as a growth model where the number of chips~$n$ plays the role of a time parameter.
In this paper we restrict ourselves to the case where all vertices other than~$v_0$ begin with~$0$ chips---other work such as \cite{Fey} examines the graph~$\Z^d$ with other background conditions.

Given a graph~$G$, the Sandpile group of~$G$ consists of the set of stable configurations on~$G$ together with an operation~$\oplus$ given by
\begin{equation*}
	\eta_1 \oplus \eta_2 := {(\eta_1 + \eta_2)}^{\circ}.
\end{equation*}
Intuitively, this operation consists of stacking the chips of~$\eta_1$ and~$\eta_2$ on top of each other and then stabilizing the configuration.
Letting~$\Delta$ denote the Laplacian matrix of~$G$, we define the reduced Laplacian~$\Delta'$ by removing the row and column of~$\Delta$ that correspond to the sink vertex.
Writing $\Z^{|V|-1}\Delta'$ as the integer row span of~$\Delta'$, the Sandpile group is isomorphic to~${\Z^{|V| - 1} / \Z^{|V| - 1} \Delta'}$.
The determinant of~$\Delta'$ equals the order of the Sandpile group.
This determinant also equals the number of spanning trees of~$G$.

The elements of the Sandpile group are called recurrent configurations.
The name ``recurrent'' comes from considering the Markov process where a vertex is chosen uniformly at random, a chip is added to this vertex, and the resulting configuration is stabilized.
This process has a single class of recurrent configurations, which form the elements of the Sandpile group.
More precisely, a configuration~$\sigma$ is recurrent if the following hold:
\begin{itemize}
	\item $\sigma$ is stable.
	\item Given any configuration~$\eta$, there exists a configuration~$\eta'$ such that~${\sigma = \eta + \eta'}$.
\end{itemize}

\begin{wrapfigure}{o}{0.45\textwidth}
 \centering
	\vspace{-10pt}
	\includegraphics[width=0.38\textwidth]{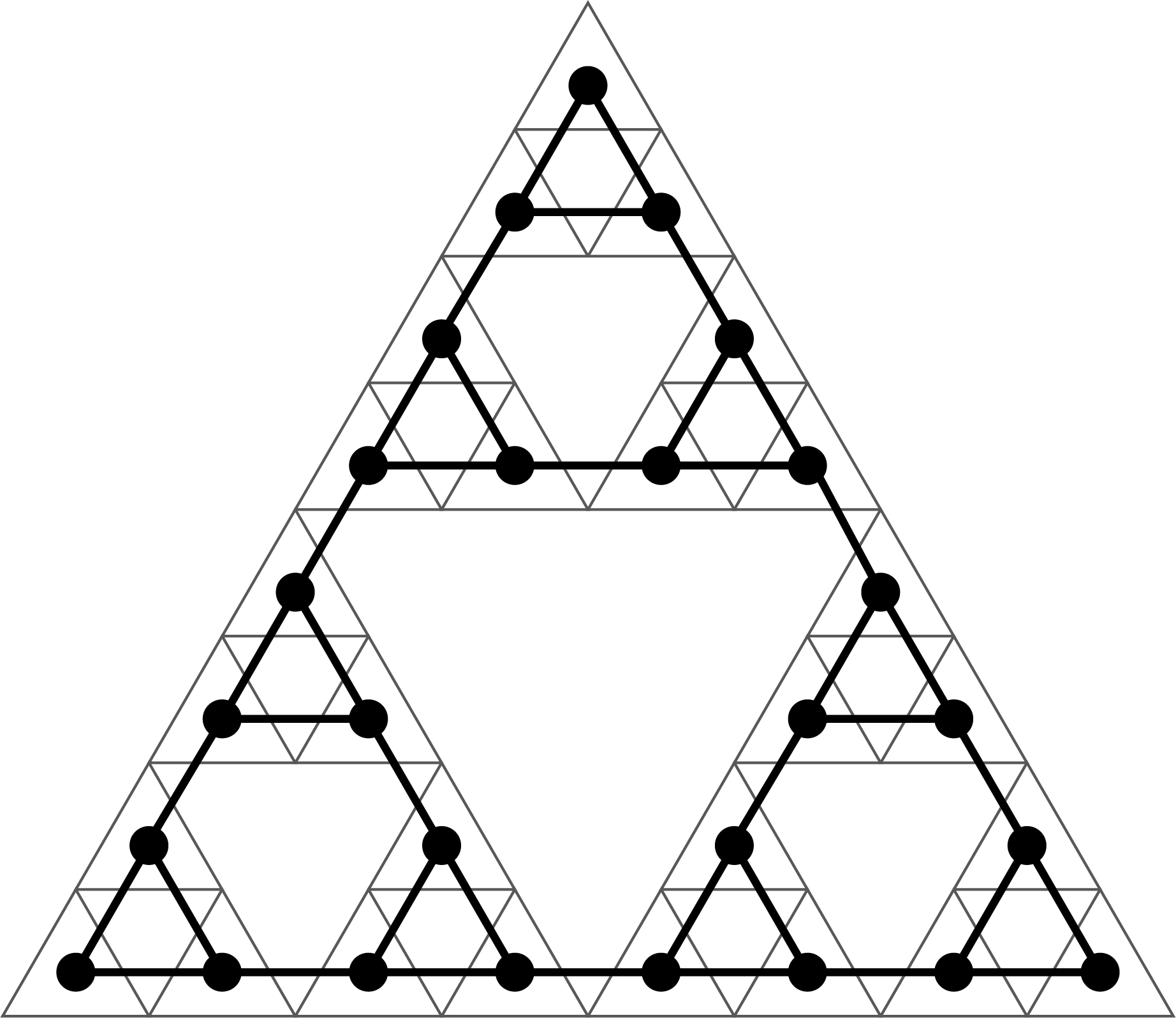}
	\caption{
		The Sierpinski Gasket Cell Graph, in which each level~$0$ cell is a node with edges to its neighbors.
	}
	\label{fig:SGC-outline}
	\vspace{-25pt}
\end{wrapfigure}

The primary graph we use throughout this paper is the Sierpinski Gasket, which we denote as~$SG$.
See~\cite{Strichartz} for an explicit construction of this graph.
We note that~$SG$ is an example of a post critically finite fractal, which is a class of fractals characterized by the ability to disconnect the fractal by removing a small set of junction vertices.
We use~$SG_m$ to represent an $m^{th}$~level graph approximation of the Sierpinski Gasket; as an example consider~$SG_2$ in \ref{fig:SG2}.
The level of graph approximation is defined analogously for the other fractals.
Additionally, we consider~$SGC$: the cell graph of~$SG$ (see \ref{fig:SGCell_Id}) constructed by placing a vertex in each cell of~$SG$ and connecting these vertices if their cells are connected in~$SG$.

%-------------------------------------------------------------------------------

\section{Boundary Growth}\label{sec:bound-grow}

Consider placing a stack of $n$~chips on vertex~$v_0$ of~$SG_m$ (see \ref{fig:SG2}).
Take~$m$ large enough so that chips never reach the boundary during the toppling procedure (that is, the chips remain contained in the bottom two level~${m - 1}$ subgraphs of~$SG_m$ and never reach the top triangle).

\begin{figure}
	\includegraphics[scale=0.40]{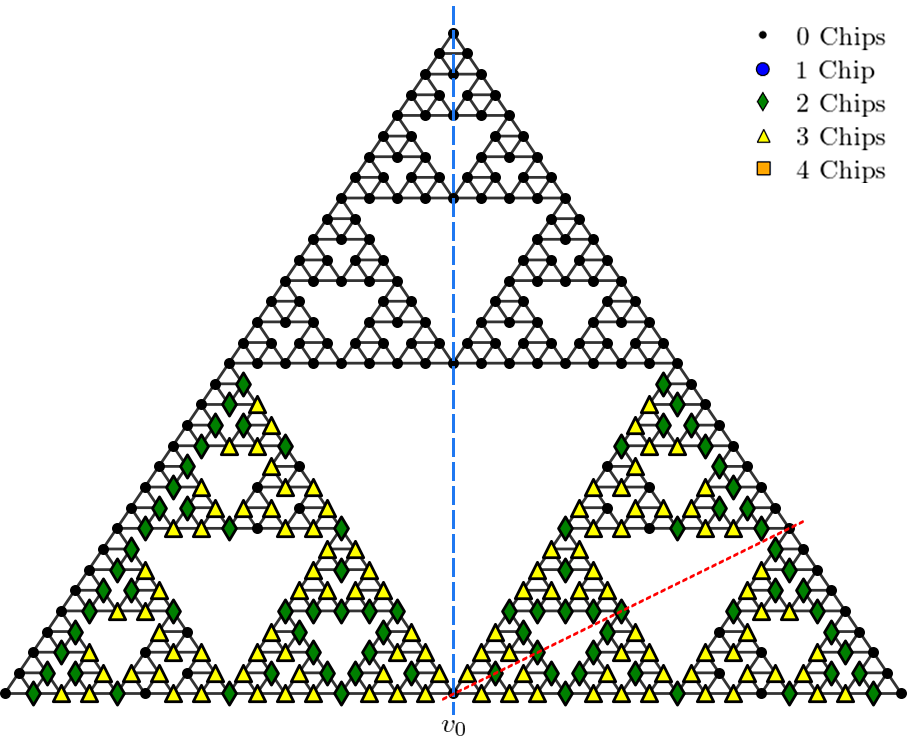}
	\centering
	\caption{
    Stabilized~$SG_5$ after placing $500$~chips on~$v_0$.
    Note two lines of symmetry through~$v_0$: one that bisects the entire graph and another that bisects the bottom-right subgraph.
  }
	\label{fig:sg5500}
\end{figure}

In \ref{fig:sg5500} we emphasize two lines of symmetry: one that divides the left and right portions of the graph and another that divides the bottom right portion of the graph.
Since chips from~$v_0$ never reach the top third of the graph, the only remaining connection between the bottom two halves is through~$v_0$.
Thus we concentrate our analysis on only the bottom right third of the graph, as the behavior there is symmetric to that of the bottom left third.

Our goal is to establish the growth rate of the diameter of~$T_n$, which is given by
\begin{equation*}
  \max_{u, v \,\in\, T_n} d(u, v).
\end{equation*}
Towards this end, we consider the set~$S_n$ of vertices that at some point have contained a chip during the stabilization process.
We have ${S_n = T_n \cup \partial T_n}$, where ${\partial T_n}$ denotes vertices adjacent to those in~$T_n$.
Note that $S_n$ has the same asymptotic growth rate as~$T_n$.

Let the radius~$r_n$ of~$S_n$ be the maximum value of~$d(v_0, v)$ over all~${v \in V}$ in~$S_n$.
We prove the following theorem:
\begin{thm} \label{thm:SGGrowth}
  Placing $n$~chips on~$v_0$, the radius~$r_n$ grows asymptotically as ${r_n = \Theta\left(n^{1/D_f}\right)}$, where ${D_f = \frac{\log 3}{\log 2}}$ is the fractal dimension of~$SG$.
\end{thm}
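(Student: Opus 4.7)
The plan is to prove matching upper and lower asymptotic bounds on $R$ separately, using two main ingredients: (i) the self-similar volume estimate $|V_r| \asymp r^{D_f}$ for balls in $SG_i$ (which follows from the fact that a $2^k$-neighborhood of $v_0$ consists of roughly $3^k$ vertices, by construction of $SG_i$), and (ii) conservation of sand together with the fact that vertex degrees are uniformly bounded (each vertex of $SG_i$ has degree at most $4$).

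For the lower bound $R \gtrsim N^{1/D_f}$, I would argue as follows. After stabilization, the total number of grains distributed on $SG_i$ is exactly $N$, since no grains reach the boundary by hypothesis and no sink absorbs them. Because the configuration is stable, each vertex $v$ holds at most $\deg(v) - 1 \leq 3$ grains, so the set of grain-carrying vertices has size at least $N/3$. All such vertices lie in $V_R$, and by the volume estimate $|V_R| \leq C R^{D_f}$. Combining gives $R \geq c N^{1/D_f}$.

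The upper bound $R \lesssim N^{1/D_f}$ is the main obstacle. Here one must show that the stabilized sandpile is \emph{dense} in the ball it occupies — otherwise grains could in principle stretch out along a thin path of length much larger than $N^{1/D_f}$. My approach would be a self-similar cell argument, exploiting the structure of the cell graph $SGC$ introduced earlier. If a grain reaches distance $R$ from $v_0$, then for each scale $k$ with $2^k \leq R$, the $k$-level cells containing the path from $v_0$ to that grain must each have ``processed'' enough sand to push at least one grain across their boundary. Using the least-action/odometer characterization, one shows that a cell's boundary flux forces a minimum odometer value throughout the cell, and hence a minimum total grain occupation proportional to its volume. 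Summing across all cells nested along the path and using the scaling $|V_r| \asymp r^{D_f}$ yields $N \geq c' R^{D_f}$, i.e.\ $R \leq C N^{1/D_f}$.

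The hardest step will be converting the qualitative intuition ``sand fills each traversed cell to a positive density'' into a quantitative lower bound on the occupied mass inside $V_R$. Unlike on $\Z^d$, where translation invariance and monotone coupling make such density bounds nearly automatic, on $SG$ the graph is only locally self-similar and the grains can interact in complicated ways at cell junctions. I expect that invoking the symmetries displayed in Figure~\ref{fig:SG5500} (the vertical and nested reflections of the stabilized pile) and an induction on the cell level $k$ will be essential to making this quantitative; once the density estimate is obtained, the upper bound on $R$ follows immediately from conservation of sand and the volume estimate.
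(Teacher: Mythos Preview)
Your lower bound is correct and is essentially the paper's argument (Lemma~\ref{lem:lower}): conservation of mass plus the degree bound $\deg(v)\le 4$ gives $N\le 1+3|V_{R-1}|$, and the self-similar volume estimate $|V_{2^k}|\asymp 3^k=(2^k)^{D_f}$ yields $R\gtrsim N^{1/D_f}$.

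For the upper bound, however, your plan diverges from the paper and the step you flag as ``hardest'' is genuinely missing. A path-based cell argument of the kind you sketch does \emph{not} by itself give the density you need: knowing that one grain crossed the far boundary of a $k$-cell only tells you that a single junction vertex toppled, and the least-action principle gives no uniform lower bound on the odometer in the interior of that cell from one unit of boundary flux. Summing ``along the path'' would at best produce a bound linear in the number of cells traversed, not in their total volume, and so would not recover the exponent $D_f$.

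The paper closes this gap differently, and the mechanism is worth noting. First, it proves a strong structural fact specific to $SG$ (Lemmas~\ref{lem:pattern} and~\ref{finallem}): by the reflection symmetries you allude to, whenever sand appears at distance greater than $2^j$, \emph{every} vertex of $V_{2^j}$ has already toppled --- not just those along some path. Second, it invokes Rossin's theorem, which says that the minimum mass needed to make every vertex of a set $X$ topple at least once is $|in(X)|+|C_G(X)|$, the number of internal plus boundary edges of $X$. Taking $X=V_{2^{k-1}}$ with $2^{k-1}<R\le 2^k$ gives
\[
N \ge M_X > |in(X)| \ge |E_{2^{k-1}}| = 6\cdot 3^{k-1} = 2\cdot 3^{k} \ge 2R^{D_f},
\]
which is the desired $R<(N/2)^{1/D_f}$. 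So rather than estimating occupation density cell-by-cell, the paper replaces ``positive density of grains'' by the sharper statement ``every vertex has toppled,'' and then counts edges instead of vertices via Rossin's result. If you want to salvage your approach, the missing ingredient is precisely a lemma of the form of Lemma~\ref{finallem}; once you have it, Rossin's theorem is the clean way to convert it into a mass lower bound.
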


We begin the proof of \ref{thm:SGGrowth} by providing a counting argument for a lower bound on~$r_n$.

\begin{lemma} \label{lem:lower}
	Let $n$ be the number of chips placed on~$v_0$ and $r_n$ be the radius of~$S_n$.
	Then for~${n \ge 4}$, ${\left(\frac{n}{20}\right)^{\!1/D_f} < r_n}$.
	Thus ${r_n = \Omega(n^{1 / D_f})}$.
\end{lemma}

\begin{proof}
	We need ${n \ge 4}$ in order for $v_0$ to topple and for $r_n$ to be positive.
	The following argument holds for~${n \ge 4}$.

	The maximum number of chips on a stable vertex~$v$ is~${\deg(v) - 1}$.
	If we fix an~$r$ and place the maximum stable number of chips on all vertices of~$V_{r-1}$, by adding one additional chip to~$v_0$ we guarantee a toppling into a vertex of distance~$r$.
	Thus given~$r_n$, we have an upper bound on the number of chips originally stacked on~$v_0$,
  \begin{equation*}
  	n \leq 1 + \smashoperator{\sum_{v\in V_{r_n-1}}} (\deg(v) - 1) .
  \end{equation*}
  But ${\deg(v) = 4}$ for all vertices, so this inequality becomes
  \begin{equation}  \label{eq:Vest}
  	n \leq 1 + 3 \left|V_{r_n-1}\right|.
  \end{equation}
  There exists an integer~${k > 0}$ such that ${2^{k-1} < r_n \leq 2^k}$.
  And for~$k > 0$,
  \begin{equation} \label{eq:Rest}
  	|V_{2^k}| = 3^k + 2 < 3(3^k) = 3(2^k)^{D_f}.
  \end{equation}
  Note the leftmost equality follows from the number of vertices in $SG_k$ being given by ${\frac{1}{2}(3^{k+1}+3)}$.
	The subgraph~$V_{2^k}$ is formed by joining two $SG_{k-1}$ graphs at a boundary vertex.

  Combining these inequalities, we observe
  \begin{equation}
  	n < 4 \left|V_{2^k}\right| < 4 \big(3(2^k)^{D_f}\big) = 20 (2^{k-1})^{D_f} < 20 (r_n^{D_f}).
  \end{equation}
  Rearranging this inequality yields our desired result.
\end{proof}

Next we obtain an upper bound.
In the following lemmas we denote the junction point at distance $2^j$ as $v_{2^j}$ (see \ref{fig:induction}).
\begin{figure}
	\centering
	\includegraphics[scale=0.25]{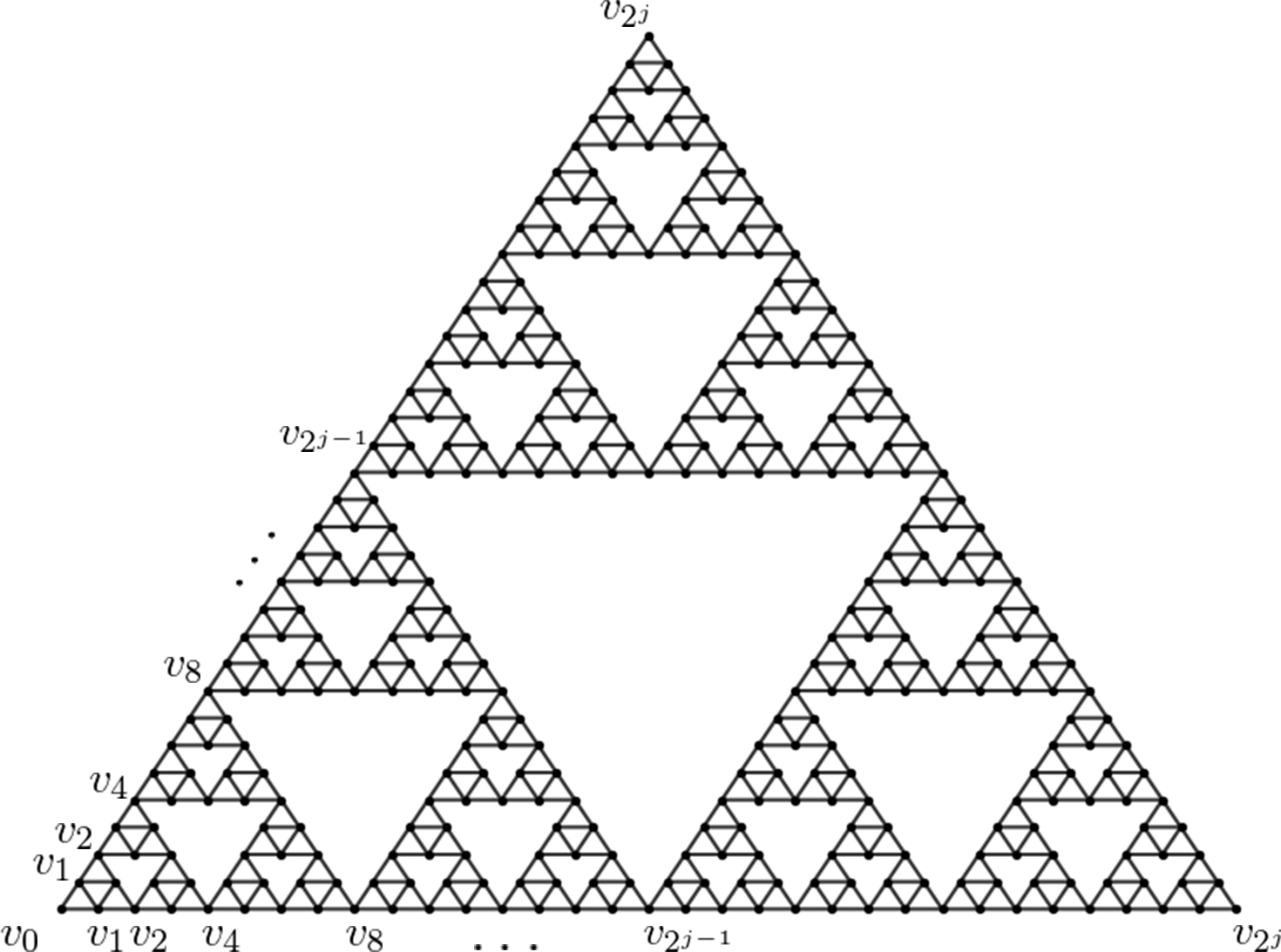}
	\caption{Vertex labels for $SG$ with maximum distance~$2^{j}$.}
	\label{fig:induction}
\end{figure}

\begin{lemma} \label{lem:pattern}
	If $T_n$ contains a vertex~$v$ with~${d(v_0, v) = 2^j}$, then for ${j \ge 1}$, ${V_{2^j - 1} \subseteq T_n}$.
\end{lemma}

\begin{proof}
	First consider a base case.
	By symmetry the vertices $v_1$ in \ref{fig:basecase} always have the same number of chips.
	The lowest~$n$ for which the $v_1$~vertices topple is~${n = 16}$.
	Once the $v_1$~vertices have $4$~chips, each will topple, and the final configuration will be as shown in \ref{fig:basecase1}.
	Here each $v_1$~vertex has~$2$ chips, and ${V_1 \subseteq T_n}$.

	\begin{figure}
		\centering
		\begin{subfigure}[b]{0.4\textwidth}
			\includegraphics[width = \textwidth]{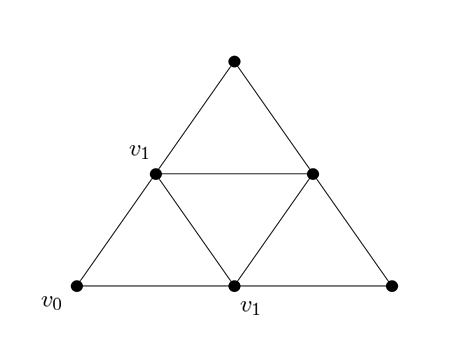}
			\caption{}
			\label{fig:basecase}
		\end{subfigure}
		\begin{subfigure}[b]{0.4\textwidth}
			\includegraphics[width = \textwidth]{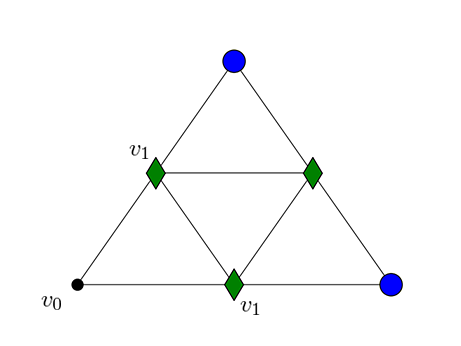}
			\caption{}
			\label{fig:basecase1}
		\end{subfigure}
		\caption{(a) Vertex labels for the base case: the right half of $SG$ with maximum distance 2. (b) Resulting configuration after each $v_1$ topples once.}
		\label{fig:period-lemma}
	\end{figure}

	Next assume the lemma holds for some~$j$.
	Noting that $SG$ is a p.c.f.~fractal, the only way vertices at distances larger than~$2^j$ receive chips is for both~$v_{2^j}$ to topple.
	Attached to each~$v_{2^j}$ is a gasket of depth~$2^j$.
	These gaskets are connected only at a single vertex, which has distance~$2^{j+1}$.
	Since the topplings may be carried out in any order, we can use the same sequence of topplings on these next two gaskets that was used in the first gasket.
	After these topplings are finished, the remaining chips may be stabilized in any order.
	By carrying out this sequence of topplings, we ensure that if a $v_{2^{j+1}}$~vertex topples, then all vertices in~$V_{2^{j+1}-1}$ also topple.
	Thus ${V_{2^{j+1}-1} \subseteq T_n}$.

\end{proof}

\begin{lemma}\label{finallem}
	If there are chips on vertices~$v$ with~${d(v_0, v) > 2^j}$, then every vertex in~$V_{2^j}$ must have toppled.
	That is, ${r_n > 2^j}$ implies~${V_{2^j} \subseteq T_n}$.
\end{lemma}
\begin{proof}
	For vertices at distance greater than~$2^j$ to have chips, the junction vertices~$2^j$ must topple.
  \ref{lem:pattern} then implies that ${V_{2^j - 1} \subseteq T_n}$, so in total ${V_{2^j} \subseteq T_n}$.
\end{proof}

The last idea needed is a theorem from Rossin \cite{Rossin}:
\begin{thm}(Rossin)
	Given a graph with vertices and edges~${G = (V,E)}$ and ${X \subseteq V}$ a set of connected vertices, define ${C_G(X) = \{(i,j)\in E \mid i \in X, j\in V\setminus X\}}$.
	The minimum number of chips that must be placed on~$X$ so that all vertices of~$X$ topple at least once, $M_X$, is ${|\inedges(X)| + |C_G(X)|}$, where $\inedges(X)$ is the set of edges inside~$X$.
\end{thm}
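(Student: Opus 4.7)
The plan is to establish matching upper and lower bounds on $M_X$, both equal to $|in(X)| + |C_G(X)|$. The upper bound would be proved constructively by exhibiting a configuration with this many grains that topples every vertex of $X$; the lower bound would be proved by a counting argument built on the legality of the toppling sequence. Throughout, I write $d_X(v)$ for the number of neighbors of $v$ lying in $X$.

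For the upper bound, I would fix an arbitrary ordering $v_1, \ldots, v_n$ of $X$ and, for each $i$, let $b_i$ denote the number of neighbors of $v_i$ in $X$ that appear earlier in the ordering, so that $b_i \leq d_X(v_i) \leq \deg(v_i)$. Setting $\eta(v_i) = \deg(v_i) - b_i$ then produces a non-negative configuration, and a double-counting argument gives
\[
\sum_{i=1}^n \eta(v_i) \;=\; \sum_{v \in X} \deg(v) \;-\; \sum_{i} b_i \;=\; \bigl(2|in(X)| + |C_G(X)|\bigr) - |in(X)| \;=\; |in(X)| + |C_G(X)|,
\]
since every interior edge contributes twice to the degree sum and once to $\sum_i b_i$ (credited to the later of its two endpoints). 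Toppling the vertices in the order $v_1,\ldots,v_n$ is legal: when $v_i$'s turn arrives it has received exactly $b_i$ grains from earlier topples, so its grain count is $\eta(v_i) + b_i = \deg(v_i)$.

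For the lower bound, I would let $\eta$ be a configuration supported on $X$ whose stabilization topples every $v \in X$ at least once, and write $u$ for its odometer. The sandpile identity
\[
\eta(v) \;=\; \deg(v)\,u(v) \;-\; \sum_{w \sim v} u(w) \;+\; \text{final}(v), \qquad \text{final}(v) \geq 0,
\]
summed over $v \in X$ in the model case $u \equiv 1$ on $X$ and $u \equiv 0$ on $V \setminus X$, yields
\[
\sum_{v \in X} \eta(v) \;=\; |C_G(X)| \;+\; \sum_{v \in X} \text{final}(v).
\]
Legality of any toppling order $v_1, \ldots, v_n$ requires $\eta(v_i) + b_i \geq \deg(v_i)$, which forces $\text{final}(v_i) \geq d_X(v_i) - b_i$; summing over $i$ gives $\sum_v \text{final}(v) \geq |in(X)|$, and therefore $\sum_v \eta(v) \geq |in(X)| + |C_G(X)|$.

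The main obstacle is reducing the general case, in which $u$ may exceed $1$ on $X$ or be positive on $V \setminus X$, to the model case used above. The abelian property gives a handle here: any additional toppling is captured by a pointwise-larger odometer, and plugging this back into the sandpile identity shows that each extra unit of firing either increases some $\text{final}(v)$ or sends grains irrevocably outside $X$, so $\sum \eta$ can only grow. With that reduction in place, the legality deficit computed in the preceding paragraph is precisely the extra $|in(X)|$ above the naive lower bound $|C_G(X)|$, completing the argument.
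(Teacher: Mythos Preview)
The paper does not prove this statement at all: it is quoted from Rossin's thesis and used as a black box, so there is no in-paper argument to compare your proposal against.

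On the merits of your proposal, the upper bound is correct and complete. The lower bound, however, has a real gap. Your model-case computation (odometer $u\equiv 1$ on $X$, $u\equiv 0$ off $X$) is fine, and the legality inequalities $\eta(v_i)+b_i\ge\deg(v_i)$ do yield $\sum_{X}\text{final}\ge |in(X)|$ there. But the reduction paragraph does not actually carry this through to general $u$. Summing the sandpile identity over $X$ (with $V\setminus X$ treated as sinks) gives
\[
\sum_{v\in X}\eta(v)\;=\;\sum_{\substack{(v,w)\in C_G(X)\\ v\in X}} u(v)\;+\;\sum_{v\in X}\text{final}(v),
\]
and $u\ge 1$ on $X$ delivers the $|C_G(X)|$ term, but you still need $\sum_X\text{final}\ge |in(X)|$. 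Your sentence ``each extra unit of firing either increases some $\text{final}(v)$ or sends grains irrevocably outside $X$, so $\sum\eta$ can only grow'' is true but only reproduces the displayed identity with $\text{final}\ge 0$; it does not supply the missing $|in(X)|$. Nor can you fall back on the model case by reordering: on the path $a\!-\!b\!-\!s$ with $\deg(a)=1$, $\deg(b)=2$ and $\eta=(2,0)$, the odometer is $(3,1)$ and no legal ordering topples each of $a,b$ exactly once, even though $\sum\eta=2=|in(X)|+|C_G(X)|$ is tight.

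A clean way to close the gap is to observe that whenever every vertex of $X$ topples, the resulting $\text{final}$ is a \emph{recurrent} configuration on the sinked graph $(X,\partial X)$; then run your same edge-counting on a burning order $w_1,\dots,w_n$ for $\text{final}$ rather than on a toppling order for $\eta$. Dhar's criterion gives $\text{final}(w_i)\ge d_X(w_i)-b_i$ for each $i$ (with $b_i$ the number of already-burned $X$-neighbours of $w_i$), and summing yields $\sum_X\text{final}\ge |in(X)|$, which is exactly the missing piece.
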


With this theorem, we establish an upper bound on~$r_n$.
\begin{lemma} \label{lem:upper}
	$\ds{r_n < \left(\frac{n}{2}\right)^{\!1/D_f}}$\!, and thus ${r_n = O(n^{1/D_f})}$.
\end{lemma}

\begin{proof}
	Note that ${\inedges(SG_k) = 3^{k+1}}$ and ${\inedges(V_{2^k}) = 2\cdot3^k}$.
	Rossin's theorem yields ${M_{V_{2^k}} > 2\cdot3^k}$.
	By \ref{finallem}, if ${r_n \ge 2^k}$, then ${V_{2^k} \subseteq T_n}$.
	Since $M_{V_{2^k}}$ is the minimum number of chips to place on $V_{2^k}$ such that all of its vertices topple, we must have ${n > M_{V_{2^k}}}$.
	Thus
	\begin{equation}
		n \geq M_{V_{2^k}} > 2 \cdot 3^k =2 (2^k)^{D_f} \geq 2{r_n}^{D_f}
	\end{equation}
	Rearranging this inequality yields the desired result.
\end{proof}

Using \ref{lem:lower} and \ref{lem:upper} respectively as lower and upper bounds, we obtain \ref{thm:SGGrowth}.
For reference, these bounds are shown in \ref{fig:SGGrowth_data}.

\begin{figure}[h]
	\centering
	\includegraphics[scale=0.7]{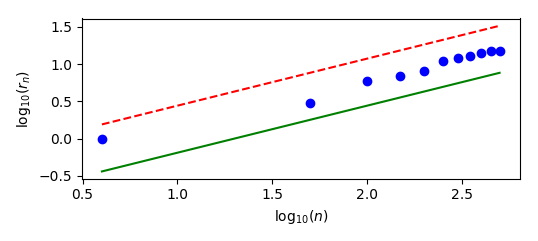}
	\caption{
		Resulting upper (red dashed) and lower (green solid) bounds of growth for the boundary on~$SG$ with respect to the number of chips, compared to actual data (circles). Plotted on a log-log scale with slopes approximately $\frac{1}{D_f}$.
	}
	\label{fig:SGGrowth_data}
\end{figure}

Now we relate the radius~$r_n$ of~$S_n$ to the diameter of~$T_n$.
\begin{cor} \label{cor:diameter}
	The diameter of the set of toppled sites~$T_n$ grows asymptotically as~${\Theta\left(n^{1/D_f}\right)}$.
\end{cor}
\begin{proof}
	Due to the symmetry of~$SG$ through~$v_0$, the diameter of~$S_n$ is equal to twice the radius~$r_n$, and thus has the same asymptotic behavior.
	As ${S_n = T_n \cup \partial T_n}$, the asymptotic growth rate of~$T_n$ is the same as that of~$S_n$.
\end{proof}

We note this property is not limited only to~$SG$.
By a similar argument to \ref{lem:pattern} and \ref{finallem}, we can show the following proposition.
\begin{prop}
	Let~$v_0$ be a vertex such that the junction points with the next level graph approximation are all at the same distance from~$v_0$.
	We then have the following results:
	\begin{itemize}
		\item $HG$:
			\begin{equation*}
				r_n = \Theta\left( n^{1/D_f}\right),
			\end{equation*}
			where $D_f = \frac{\log 6}{\log 3}$ is the fractal dimension of $HG$.
		\item $MG$:
			\begin{equation*}
				r_n = \Theta\left( n^{1/D_f}\right),
			\end{equation*}
			where $D_f = \frac{\log 6}{\log 2}$ is the fractal dimension of $MG$.
		\item $PG$:
			\begin{equation*}
					r_n = \Theta\left( n^{1/D_r}\right),
			\end{equation*}
			where ${D_f = \frac{\log 5}{\log \frac{3 + \sqrt{5}}{2}} \approx 1.67}$ is the fractal dimension of~$PG$, and ${D_r = \frac{\log 5}{\log(1+\sqrt{3})} \approx 1.60}$.
	\end{itemize}
\end{prop}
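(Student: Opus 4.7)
The plan is to replicate the three-ingredient structure used to prove Theorem \ref{thm:SGGrowth}: (a) a symmetric-toppling inductive lemma in the spirit of Lemma \ref{lem:pattern}; (b) a containment lemma in the spirit of Lemma \ref{finallem}; and (c) a counting lower bound on $R$ as in Lemma \ref{lem:lower}, combined with an application of Rossin's theorem for the upper bound as in Lemma \ref{lem:upper}. For each fractal $F \in \{HG, MG, PG\}$, let $b$ be the number of sub-cells at each subdivision ($b = 6, 6, 5$ respectively), and let $\rho$ be the graph-distance scale between consecutive junction layers around $v_0$ ($\rho = 3$ for $HG$, $\rho = 2$ for $MG$, and $\rho = 1+\sqrt{3}$ for $PG$). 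The target growth law is then $R = \Theta(N^{1/D})$ with $D = \log b / \log \rho$, which matches $D_f$ for $HG$ and $MG$ and produces $D_r$ for $PG$.

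First, I would generalize Lemma \ref{lem:pattern} by induction on the junction level $j$. The hypothesis in the proposition that the level-one junction points topple symmetrically around $v_0$ guarantees inductively that at every level, the sub-copies attached at distance $\rho^j$ receive grains symmetrically; so as long as no vertex at the level-$j$ junction layer has toppled, all vertices at distance $\rho^j - 1$ carry the same number of grains, and the configuration inside $V_{\rho^{j-1}-1}$ can be analyzed cell by cell. The base case is a finite check inside a single cell; the induction step mirrors the $SG$ argument almost verbatim, as the only facts used there were p.c.f.-ness and symmetric toppling, both of which hold here. The analog of Lemma \ref{finallem} then follows: if any grain lies beyond distance $\rho^j$, some vertex at distance $\rho^j$ has toppled, and therefore every vertex in $V_{\rho^j}$ has toppled.

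For the lower bound on $R$ I would apply the degree count: writing $\Delta$ for the (finite, uniform) maximum vertex degree in $F$, we have $N \leq 1 + (\Delta-1)|V_{R-1}|$. Choosing $k$ with $\rho^{k-1} < R \leq \rho^k$ and using the cellular recursion $|V_{\rho^k}| = \Theta(b^k)$ (together with $b^k = \rho^{kD} = \Theta(R^D)$) yields $R = \Omega(N^{1/D})$. For the upper bound, the containment lemma allows me to invoke Rossin's theorem on $X = V_{\rho^{k-1}}$: since all of $X$ must topple once before any grain escapes to distance $> \rho^{k-1}$, we get $N \geq M_X > |in(X)| = \Theta(b^{k-1}) = \Theta(R^D)$, hence $R = O(N^{1/D})$.

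The main obstacle is identifying $\rho$ for $PG$, where the correct quantity is the graph distance, not the Euclidean contraction, between adjacent level-one pentagonal junction vertices. A direct shortest-path computation inside $PG_1$ gives graph distance $1+\sqrt{3}$ between those junctions, which is what produces the exponent $D_r = \log 5/\log(1+\sqrt{3})$ in place of $D_f$; for $HG$ and $MG$ the per-cell boundary-to-boundary graph distance equals the reciprocal of the Euclidean contraction ratio ($3$ and $2$), so $D$ coincides with $D_f$. A secondary subtlety is verifying the symmetry hypothesis itself at $v_0$: one must choose $v_0$ so that its stabilizer in the symmetry group of $F$ permutes the level-one sub-cells adjacent to $v_0$ transitively, which is exactly what is needed to propagate the inductive step of the generalized Lemma \ref{lem:pattern}.
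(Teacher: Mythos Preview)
Your approach is exactly the one the paper intends: the paper does not give a separate proof but states that the result follows ``by a similar argument to Lemmas \ref{lem:pattern} and \ref{finallem},'' and then remarks that $PG$ differs because the growth exponent is governed by the graph metric rather than the Euclidean contraction ratio. Your three-ingredient outline (symmetric-toppling induction, containment lemma, vertex/edge count combined with Rossin's theorem) reproduces this faithfully.

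There is, however, one concrete slip in your treatment of $PG$. You write that ``a direct shortest-path computation inside $PG_1$ gives graph distance $1+\sqrt{3}$ between those junctions.'' Graph distances are integers, so this cannot be right as stated. The scaling factor $\rho = 1+\sqrt{3}$ does not arise from a single-level computation; rather, the junction-to-junction graph distance $d_k$ in $PG_k$ satisfies a linear recurrence (coming from the way shortest paths across a level-$k$ pentagon must traverse sub-pentagons of two different depths), and $1+\sqrt{3}$ is the dominant root of the characteristic polynomial, so that $d_k \sim c(1+\sqrt{3})^k$. You should replace the ``direct computation'' sentence with this recurrence argument; once that is done, the rest of your counting ($|V_{d_k}| = \Theta(5^k)$, $|E_{d_k}| = \Theta(5^k)$, hence $N = \Theta(R^{D_r})$ with $D_r = \log 5/\log(1+\sqrt{3})$) goes through unchanged.
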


For explicit constructions of these graphs, see \cite{Strichartz} examples~$4.1.1$, $4.1.2$, and~$4.1.4$.
Note that~$PG$ is different from the other graphs---the fractal dimension is calculated with respect to the Euclidean metric, but we make use of the graph metric in the model.

\begin{figure}
	\centering
	\begin{subfigure}[b]{0.35\textwidth}
		\includegraphics[width = \textwidth]{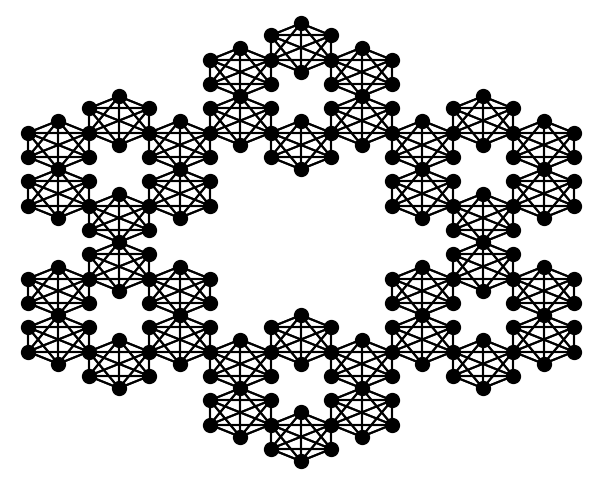}
		\caption{}
		\label{fig:hg2mini}
	\end{subfigure}
	\begin{subfigure}[b]{0.3\textwidth}
		\includegraphics[width = \textwidth]{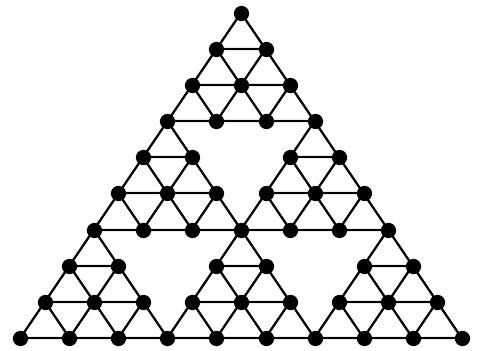}
		\caption{}
		\label{fig:mg2mini}
	\end{subfigure}
	\begin{subfigure}[b]{0.3\textwidth}
		\includegraphics[width = \textwidth]{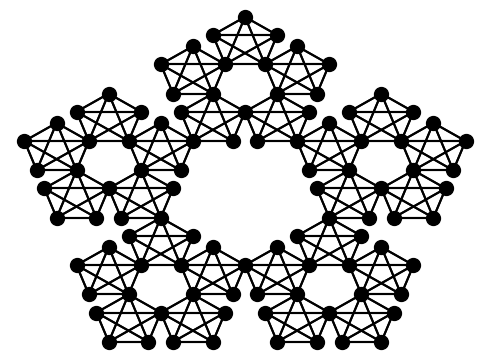}
		\caption{}
		\label{fig:pg2mini}
	\end{subfigure}
	\caption{(a) $HG_2$. (b) $MG_2$. (c) $PG_2$.}
	\label{fig:graphs}
\end{figure}

%-------------------------------------------------------------------------------

\section{Periodicity}\label{sec:period}

The number of chips on a vertex~$v$ of an $SG$~graph approximation repeats in a pattern that is periodic with respect to the number of chips~$n$ initially placed on~$v_0$.
That is, using $\eta_v^{\circ}(n)$ to denote the stabilized number of chips on vertex~$v$, each $\eta_v^{\circ}(n)$ is periodic with respect to~$n$.
An analogous property also holds for $HG$, $PG$, and~$MG$.
First we show the existence of this periodic behavior, then we further characterize its properties on subgraphs of fractal graph approximations.

In \ref{sec:prelim} we gave a definition of a graph~${G = (V \cup \{s\}, E)}$ with a single sink vertex.
It is also useful to consider a graph with a set~$B$ of multiple sink vertices~${G = (V \cup B, E)}$.
The original definition is recovered by collapsing this set of vertices into a single vertex.

We make use of the following properties of the sandpile model \cite{Chapter2}:
\begin{lemma} %This lemma is not directly cited in Chapter 2. However it is a direct result of the fact that the order of toppling does not matter. Thus we can add all the chips and then topple, or we can topple half way in between. The discussion of these concepts are stated in the first few pages of Chapter 2, so we cite it as a loose reference.
	For a graph~$A$ and configurations $\eta_1$, $\eta_2$, and $\eta_3$ on~$A$,
	\begin{equation*}
		\big((\eta_1+\eta_2)^\circ+\eta_3\big)^\circ = \big(\eta_1+\eta_2+\eta_3\big)^\circ.
	\end{equation*}
\end{lemma}

\begin{lemma}\label{recurrentguarantee}
	Let $A$ be a finite graph with a distinguished sink vertex and let $\eta$ be a nonzero configuration on $A$.
	Then there exists an integer $N(A,\eta)$ such that $(k\eta)^\circ$ is recurrent for $k \geq N(A,\eta)$.
\end{lemma}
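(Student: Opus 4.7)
The plan is to combine the pigeonhole principle with a Laplacian odometer calculation; assume for convenience that $A$ is connected, since the general case follows by applying the argument componentwise on the sinked components that carry mass of $\eta$. First I would observe that the set of stable configurations on the finite graph $A$ is finite (every stable $\sigma$ satisfies $0 \leq \sigma(v) \leq \deg(v)-1$ at each non-sink $v$), so the sequence $c_k := (k\eta)^\circ$, $k=1,2,\ldots$, takes only finitely many distinct values, and hence there exist $k_1 < k_2$ with $c_{k_1} = c_{k_2}$. Setting $c := c_{k_1}$ and $p := k_2 - k_1$, the preceding lemma yields
\[
c \oplus p\eta \;=\; ((k_1\eta)^\circ + p\eta)^\circ \;=\; ((k_1+p)\eta)^\circ \;=\; c,
\]
and by induction $c \oplus jp\eta = c$ for every $j \geq 0$. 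The same reasoning shows each $c_{k_1+r} := ((k_1+r)\eta)^\circ$, $0 \leq r < p$, is fixed under $\oplus jp\eta$; writing $k = k_1 + mp + r$ with $0 \leq r < p$ and iterating the preceding lemma gives $(k\eta)^\circ = c_{k_1+r}$ for every $k \geq k_1$, so $\{(k\eta)^\circ : k \geq k_1\} = \{c_{k_1}, c_{k_1+1}, \ldots, c_{k_1+p-1}\}$ is a finite set.

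The main step is then to verify that each of these $p$ configurations is recurrent in the sense of the paper's definition. I would invoke the standard criterion (cf.~\cite{Chapter2}) that a stable configuration $c$ is recurrent if and only if there exists $\alpha \geq 0$ with $(c+\alpha)^\circ = c$ whose stabilization topples every non-sink vertex at least once. Let $L$ denote the reduced Laplacian of $A$ (the graph Laplacian with the sink's row and column removed); for a finite connected sinked graph $L$ is invertible and every entry of $L^{-1}$ is strictly positive. The odometer $\omega_j$ for the stabilization of $c + jp\eta$ back to $c$ must satisfy $L\omega_j = jp\eta$, hence $\omega_j = jp\,L^{-1}\eta$, which is strictly positive everywhere (since $\eta \neq 0$) and in particular satisfies $\omega_j(v) \geq 1$ at every non-sink $v$ once $j$ is large. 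Taking $\alpha = jp\eta$ for such $j$ furnishes the certificate of recurrence for $c$, and the same argument applies to each $c_{k_1+r}$.

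Setting $N(A,\eta) = k_1$ then completes the proof. The most delicate point, and the main obstacle, is the equivalence between the paper's ``accessible from any configuration'' definition of recurrence and the ``every non-sink vertex fires at least once'' criterion used above; this equivalence is classical but is not stated earlier in the excerpt, so a more self-contained variant would construct $\eta_3$ explicitly from a given $\eta_2$ as $\eta_3 = Np\eta + c - \eta_2 + L\mu$, with $N$ large and $\mu \geq 0$ an integer odometer chosen so that $\eta_3$ is nonnegative and the stabilization of $\eta_2 + \eta_3$ equals precisely $c$.
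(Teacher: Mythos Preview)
The paper does not supply its own proof of this lemma: it is listed among ``known properties of the sandpile model'' and simply cited to \cite{Chapter2}. So there is no in-paper argument to compare against; your proposal is filling in a proof that the authors deliberately outsourced.

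Your argument is correct. The pigeonhole step gives eventual periodicity of $k\mapsto (k\eta)^\circ$, and the Laplacian computation $\omega_j = jp\,L^{-1}\eta$ is exactly right: since $L$ is an irreducible $M$-matrix for a connected sinked graph, $L^{-1}$ has strictly positive entries, so $\omega_j$ is eventually $\geq 1$ everywhere. The criterion ``stable $c$ is recurrent iff some nonnegative addition stabilizes back to $c$ while firing every non-sink vertex'' is the standard Dhar burning test reformulation and is indeed what \cite{Chapter2} uses, so invoking it is consistent with the paper's reliance on that reference. One small point: you set $N(A,\eta)=k_1$, which is fine because the recurrence of each $c_{k_1+r}$ is established via the large-$j$ odometer, not by requiring $k$ itself to be large.

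The closing sketch of a self-contained construction of $\eta_3$ is less clean than the rest: ensuring $\eta_3 = Np\eta + c - \eta_2 + L\mu \geq 0$ is not automatic, since $Np\eta$ is only large on the support of $\eta$ and $(L\mu)(v)$ can be negative. One would need to choose $\mu$ more carefully (e.g.\ a large constant vector, for which $L\mu$ is nonnegative and supported on sink-adjacent vertices, combined with an inductive spreading argument). Since your main odometer argument already suffices, I would either drop this paragraph or tighten it.
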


\begin{defn}
	For a graph~${G = (V, E)}$ with~${B \subseteq V}$ as sinks, define the identify frame of the configuration~$\idf$ such that for~${v \in V}$, $\idf(v)$ is the number of edges that $v$ has connected to sinks.
\end{defn}

% This lemma is the Creutz identity which is equation 2.23 in Chapter 2
\begin{prop}\label{prop:Idf} (From \cite{DharAlgebraicAspects, Caracciolo}).
	Consider a graph~${G = (V, E)}$ with~${B \subseteq V}$ as sinks.
	A configuration $\eta_G$ is recurrent if and only if ${{(\idf +\eta_G)}^\circ = \eta_G}$, i.e.~$\idf$ acts as an identity configuration on recurrent configurations.
	Furthermore, in the stabilization process from~${\eta_G + Id_f}$ to~$\eta_G$, each vertex topples exactly once.
\end{prop}

Note that $\idf$ is not necessarily recurrent and thus may be different than the identity configuration~$\idr$ of the Sandpile group.

\subsection{Existence of Periodicity}

Consider a finite subset~${B \subseteq V}$ such that removing~$B$ and the edges connected to elements of~$B$ disconnects $G$ into two graphs, where at least one of these graphs is finite.
Denote this finite graph by~$F$ and define~$S$ to be the graph of~$F$ combined with vertices in~$B$ as sinks, including edges between elements of~$F$ and~$B$.
An example of this decomposition is in \ref{fig:periodDecomp}.

\begin{figure}[H]
	\centering
	\includegraphics[scale=0.2]{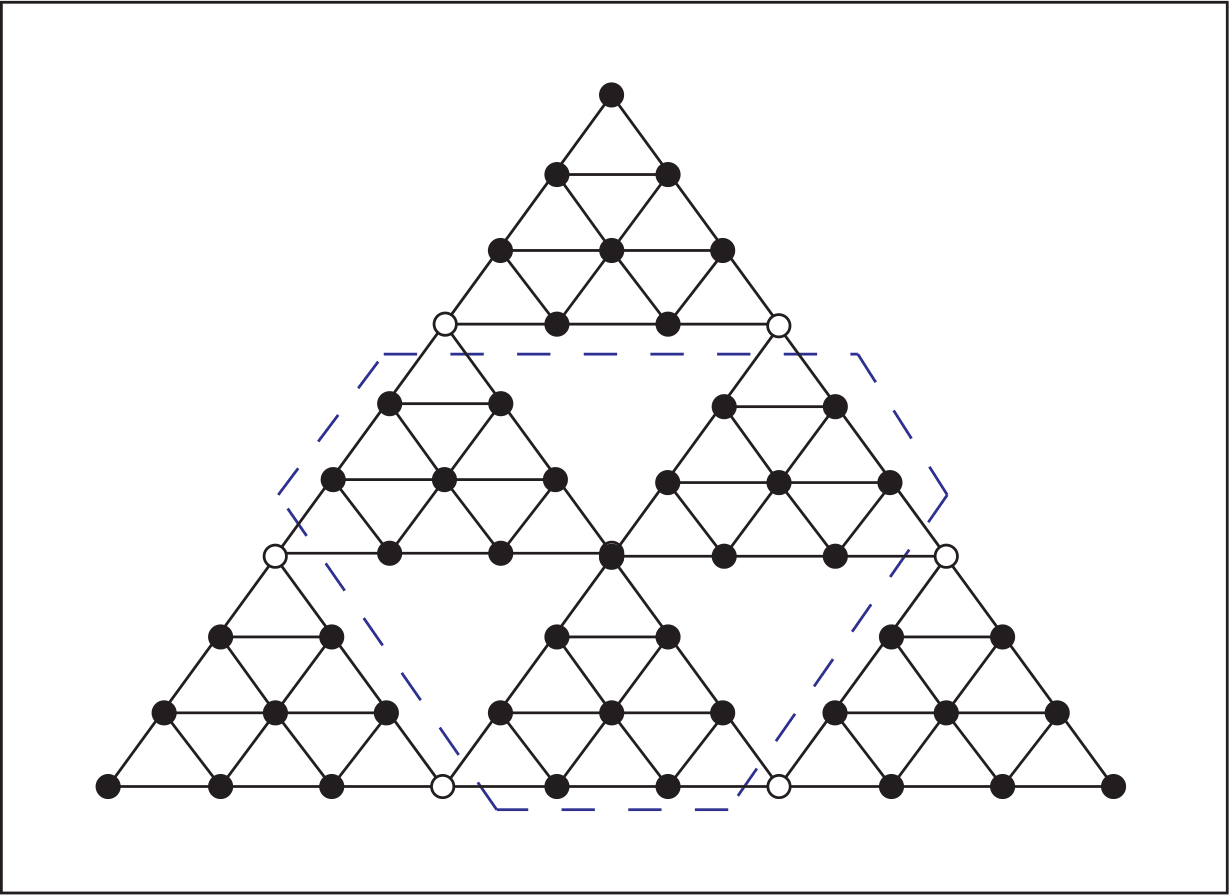}
	\caption{
		Example of a graph~$G$ where the white vertices are the subset~$B$.
		The graph~$F$ is contained in the central hexagonal outline.
		The graph~$S$ is the graph~$F$ with the white vertices from~$B$ as sinks.
	}
	\label{fig:periodDecomp}
\end{figure}

In the following, for~$B$ a subgraph of~$A$ with~$\eta_A$ a configuration on~$A$, we use ${\eta_A\big|_{B}}$ to denote the restriction of~$\eta_A$ to~$B$.
\begin{thm} \label{thm:invarianceConf}
	Let $G$, $F$, $B$, and~$S$ be as defined in the preceding paragraphs and let~$\eta_F$ and~$\eta_G$ be configurations on~$F$ and~$G$, respectively.
	Let ${\eta = k \eta_F + \eta_G}$ be a configuration on~$G$, where $\eta_F$ is set to~$0$ on vertices in~${G \setminus F}$.

	If stabilizing $\eta$ for ${k \geq N(F, \eta_F)}$ is possible and causes each vertex of~$B$ to topple the same number of times (i.e.~the odometer function is constant on~$B$), then ${\eta\big|_G}^{\circ}$ and ${\eta\big|_S}^{\circ}$ agree on~$F$.
	We write~${\eta\big|_G}$ in place of~$\eta$ to emphasize its difference from~${\eta\big|_S}$.
\end{thm}
To relate this theorem to \ref{sec:bound-grow}, the boundary~$B$ is a set of junction points~$v_{2^j}$, the inner graph~$F$ is the set of vertices~$V_{2^j - 1}$, the configuration~$\eta_F$ is a single chip on~$v_0$, and the background configuration~$\eta_G$ is~$0$ for all vertices.

\begin{proof}
	First consider $\eta\big|_S$.
	Because the vertices in~$B$ are the sinks of this graph, we stabilize the configuration by toppling only vertices in~$F$.
	This resulting configuration~${\eta\big|_S}^{\circ}$ is recurrent by \ref{recurrentguarantee}.

	Now consider $\eta\big|_G$.
	Topple only vertices in~$F$ until every vertex in~$F$ is stable.
	Now the configuration agrees with ${{\eta\big|_S}^{\circ}}$ on~$F$, as all the same topplings occurred.
	Subsequently, each time we topple once all of the vertices in~$B$ and then topple each vertex in~$F$ until all of~$F$ is stable, we still have $\eta_S$.
	That is, letting $\idf$ be the identity frame for~$B$, ${{\big(\idf + \eta\big|_S^{\circ}\big)}^{\circ} = {\eta\big|_S}^{\circ}}$ by \ref{prop:Idf}.

	Since the only vertices in~${G \setminus F}$ connected to~$F$ are the vertices in~$B$, we can stabilize~$\eta$ on all of~$G$ by repeatedly stabilizing it on~$F$ whenever~$F$ has an unstable vertex, then toppling all of the vertices on~$B$ once.
	The vertices outside of~$B$ may be stabilized arbitrarily.
	Hence ${\eta\big|_G}^{\circ}$ and ${\eta\big|_S}^{\circ}$ agree on~$F$.
\end{proof}

\begin{cor}
	For a configuration~$\eta$ that satisfies \ref{thm:invarianceConf}, vertices in~$F$ are eventually periodic with respect to the addition and stabilization of~$\eta_F$.
	That is, letting~${f_v(k)}$ denote~${(k\eta_F \oplus \eta_G)(v)}$ for each~${v \in F}$, we have that each~$f_v(k)$ is eventually periodic.
\end{cor}

\begin{proof}
	Let ${k \ge N(F, \eta_F)}$ and ${\eta = k\eta_F + \eta_G}$ for $\eta_F$ and $\eta_G$ configurations such that the hypotheses of \ref{thm:invarianceConf} are satisfied.
	By this theorem we have that ${\eta\big|_G}^{\circ}$ is equivalent to~${\eta\big|_S}^{\circ}$ on~$F$.
	However, $S$ is a finite graph and thus enters a cycle of configurations with respect to the repeated addition and stabilization of~$\eta_F$.
	Hence for all~${v \in F}$, $f_v$ is eventually periodic.
\end{proof}

This result applies to our fractal graph approximations, as we add chips only to a center point equidistant from all junction points.
If we use the junction points of the graphs as the set~$B$, we satisfy the hypotheses of \ref{thm:invarianceConf}.

%----------------------------------------------------------------------

\subsection{Periodicity of Nested Graphs}

We now consider a sequence of finite subsets~${B_n \subseteq V}$ such that removing each~$B_n$ individually disconnects $G$ into two graphs, as before.
Denote by~$F_n$ the finite connected graph resulting from the removal of~$B_n$, where the~$F_n$ form an increasing sequence~${F_1 \subseteq F_2 \subseteq \cdots}$.
Finally, define~$S_n$ to be the graph of~$F_n$ combined with the vertices in~$B_n$ as sinks; include edges in~$E_n$ that connect elements of~$F_n$ and~$B_n$.
We now fix a configuration~$\eta$ on~$F_1$.

We consider the set of configurations formed over~${k \in \N}$ by taking~$(k\eta)^\circ$.
For example, a sequence of chip stacks of increasing heights on a vertex~$v_0$ occurs when setting~$\eta$ to be the configuration with $1$~chip on~$v_0$ and $0$~chips elsewhere.
We take~$G$ and~$B_n$ as before such that given any~${k \in \N}$, the odometer function on~$B_n$ equals some constant~${c_k}$ in the stabilization of~$k\eta$ to~$(k\eta)^\circ$.
That is, ${u(v) = c_k}$ for all~${v \in B_n}$.
Letting $\idf^n$ denote the configuration
\begin{equation*}
	\underbrace{\idf \oplus \idf \oplus \cdots \oplus \idf}_{n \text{ times}},
\end{equation*}
note that $\idf^n$ is the same as configuration as we would obtain by toppling each vertex in~$B_n$ once with an otherwise $0$~background.

\begin{defn}\label{def:recurrent-config-cycle-set}
	For a configuration~$\eta$ defined on a graph~$G$ that is partitioned into subgraphs~$S_n$ as described previously, let $\cC_n$ denote the set of recurrent configurations of~$S_n$ obtained by taking ${(k\eta)}^\circ$ for~${k \in \N}$.
	That is,
	\begin{equation*}
		\cC_n := \bigcup_{k \in \N} {(k\eta)}^{\circ}.
	\end{equation*}
	Further, for~${m \ge n}$, define the restriction
	\begin{equation*}
	  \cC_m\big|_{S_n} := \bigcup_{k \in \N} {\big(k\eta\big|_{S_n}\big)}^{\circ}.
	\end{equation*}
\end{defn}

By repeatedly adding~$\eta$, we obtain a cyclic group of recurrent configurations for any~$S_n$.
Thus ${\left|\cC_n\right|}$ is the maximum period of each vertex in~$S_n$.

\begin{prop} \label{prop:restrict}
	For a configuration~$\eta$ and a set of recurrent configurations~$\cC_n$ as in \ref{def:recurrent-config-cycle-set}, for all finite~$m$ and~$n$ such that ${m \geq n \geq 1}$,
	\begin{equation*}
		\cC_m\big|_{S_n} = \cC_n.
	\end{equation*}
\end{prop}
\begin{proof}
	To show~${\cC_m\big|_{S_n} \subseteq \cC_n}$, let ${\eta_n = {\big(k\eta\big|_{S_n}\big)}^{\circ}}$ for some~${k \in \N}$.
	By \ref{prop:Idf} we know that ${{(k\eta)}^{\circ} \oplus \idf = {(k\eta)}^{\circ}}$ and that each vertex topples exactly once during this stabilization.
	Carrying out this sequence of topplings and then toppling each~${v \in B_n}$ yields the configuration~${\eta_n + \idf^n}$.
	Since ${(k\eta)}^{\circ}$ is recurrent, after toppling each vertex once, ${\eta_n + \idf^n}$ stabilizes to~$\eta_n$.
	Thus ${\eta_n \oplus \idf^n = \eta_n}$, and $\eta_n$ is a recurrent configuration formed by adding~$\eta$ to itself.
	Therefore ${\eta_n \in \cC_n}$, as desired.

	For the reverse inclusion, again let ${\eta_n = {\big(k\eta\big|_{S_n}\big)}^{\circ}}$ and ${\eta_m = {\big(k\eta\big|_{S_m}\big)}^{\circ}}$.
	Let ${N = (N_1, N_2, \ldots)}$ represent a strictly increasing sequence of the number of times we can add $\eta$ to itself such that ${\eta_n \in \cC_n}$ occurs.
	We note that since $\eta_n$ is recurrent and part of a cyclic group, $N$ will be infinitely long, and thus ${N_k \to \infty}$ as~${k \to \infty}$.
	Consider~$N_k\eta$ in~$S_m$ for any given~${N_k \in N}$.
	Topple until stable every vertex up to but not including the boundary vertices~${v \in B_n}$.
	Since $\eta_n$ is recurrent, it is invariant of topplings of~${v \in B_n}$.
	Since $F_m$ is finite, we can thus choose~${N_k \in N}$ large enough so that the resulting stable configuration, $\eta_m$, is in $\cC_m$.
	However, since $N_k$ is from $N$, we have ${\eta_m\big|_{S_n} = \eta_n}$.
	Hence ${\eta_n \in \cC_m\big|_{S_n}}$\!, so ${\cC_n \subseteq \cC_m\big|_{S_n}}$\!.
\end{proof}

From \ref{prop:restrict}, since $\ds{\left|\cC_m\big|_{S_n}\right|\leq \left|\cC_m\right|}$, it follows that $\ds{\left|\cC_n\right| \leq \left|\cC_m\right|}$ for~${n \le m}$.
Given any~${v \in G}$, let $\ds{m = \min\{i \mid v \in S_i \}}$.
Then the maximum period of any~$v$ is given by~${|\cC_m|}$.

\begin{thm} \label{thm:division}
For all $n \leq m < \infty$, $|\cC_n|$ divides $|\cC_m|$.
\end{thm}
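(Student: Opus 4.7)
The plan is to leverage the cyclic structure of $\cC_n$ and $\cC_m$ under the operation $\nu \mapsto \nu \oplus \eta$ together with Proposition \ref{prop:Idf}. The key idea is that restriction to $F_n$ intertwines the two dynamics, so the period in $S_n$ must divide the period in $S_m$.

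First I would note that, for $k$ sufficiently large, $(k\eta)^\circ$ is recurrent in both $S_m$ and $S_n$ by Lemma \ref{recurrentguarantee}. Thus $\cC_m$ is the cyclic orbit of $(k\eta)^\circ$ in $S_m$ under the map $\nu \mapsto \nu \oplus \eta$, of size $|\cC_m|$, and analogously $\cC_n$ has size $|\cC_n|$.

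The central claim is that, for $k$ large,
\[
(k\eta)^\circ_{S_m}\Big|_{F_n} = (k\eta)^\circ_{S_n},
\]
where the subscripts indicate the graph in which the stabilization is performed. Call the right-hand side $\mu_n$. To establish this I would use the abelian property together with uniqueness of the odometer. By choosing the toppling order in $S_m$ to first exhaust all topplings in $F_n$ without touching $B_n$, the resulting configuration on $F_n$ is exactly $\mu_n$, since the dynamics inside $F_n$ is identical whether $B_n$ holds grains or absorbs them, as long as no vertex of $B_n$ has toppled. Next, by hypothesis each vertex of $B_n$ topples exactly $c_k$ times in the $S_m$ stabilization, and each round of $B_n$-topplings deposits the identity configuration $Id_f^n$ onto $F_n$. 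Since $\mu_n$ is recurrent in $S_n$, Proposition \ref{prop:Idf} gives $\mu_n \oplus Id_f^n = \mu_n$ with each vertex of $F_n$ toppling exactly once, and iterating $c_k$ times leaves $\mu_n$ invariant. Topplings strictly outside $F_n \cup B_n$ cannot affect $F_n$ because $B_n$ separates them from $F_n$.

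Given the claim, the theorem follows immediately: starting from $(k\eta)^\circ_{S_m} = ((k + |\cC_m|)\eta)^\circ_{S_m}$ and restricting to $F_n$ yields $(k\eta)^\circ_{S_n} = ((k + |\cC_m|)\eta)^\circ_{S_n}$, so $|\cC_n|$ divides $|\cC_m|$. The main obstacle in this plan is the bookkeeping for the intertwining claim: correctly combining the abelian property, the constancy of the odometer on $B_n$, and Proposition \ref{prop:Idf} to conclude that the $c_k$ rounds of $B_n$-topplings act as the identity on $\mu_n$, without interference from the outside component of $G\setminus(F_n\cup B_n)$.
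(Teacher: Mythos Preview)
Your argument is correct and reaches the same conclusion, but the packaging differs from the paper's. You spend most of the effort on the ``central claim'' that $(k\eta)^\circ_{S_m}\big|_{F_n} = (k\eta)^\circ_{S_n}$ for large $k$; this is precisely the content of Proposition~\ref{prop:restrict} (the inclusion $\cC_m\big|_{S_n}\subseteq \cC_n$), which the paper has already established and simply cites. From there, you conclude via a period argument: the map $\nu\mapsto\nu\oplus\eta$ on $\cC_m$ has period $|\cC_m|$, and since restriction intertwines this map with the corresponding one on $\cC_n$, the period $|\cC_n|$ must divide $|\cC_m|$. The paper instead phrases the same step algebraically: it puts an equivalence relation on $\cC_m$ by agreement on $S_n$, observes that the class of the identity is a (normal) subgroup $N_n$, and reads off $|\cC_m| = |N_n|\cdot|\cC_n|$ from the quotient. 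Both arguments rest on the same fact---restriction to $S_n$ is compatible with $\oplus\eta$---so they are really two languages for one idea; your dynamical formulation is slightly more elementary (it never needs to name the kernel), while the paper's quotient formulation yields the extra information that $|\cC_m|/|\cC_n|$ is the size of an explicit subgroup. One small point: you invoke the hypothesis ``$o(B_n)=c_k$'' inside $S_m$, whereas the standing assumption is phrased for the ambient $G$; the paper sidesteps this by using Proposition~\ref{prop:Idf} (every vertex topples exactly once under $Id_f^m$) to get uniform toppling of $B_n$ in $S_m$ for free, which is a cleaner justification than appealing to the $c_k$ hypothesis directly.
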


\begin{proof}
	Given any ${\eta_1, \eta_2 \in \cC_m}$, we define an equivalence relation~$\sim$ by declaring ${\eta_1 \sim \eta_2}$ if and only if
	\begin{equation*}
		\eta_1 \big|_{S_n} = \eta_2\big|_{S_n}.
	\end{equation*}
	Let $N_n$ denote the equivalence class of~$\idr$, which is the identity element of~$\cC_m$.
	Since the equivalence class of the identity element is always a normal subgroup of the original group, ${\cC_m \mod N_n}$ is well defined.
	Thus we have
	\begin{equation*}
		|\cC_m| = |N_n| \left|\eta_m\big|_{S_n}\right| = |N_n| |\cC_n|.
	\end{equation*}
	Hence ${|\cC_n|}$ divides~${|\cC_m|}$.
\end{proof}

We note that this statement is not empty as $SG$, $PG$, $HG$, and~$MG$ are examples of graphs that all satisfy the above properties when adding chips to a central point of the fractal graph.

Finally, we present two conjectures based on observations of these periodic properties.

\begin{conj}
	Let $K$ denote any of the following fractal graphs.
	Let $\ds{m = \min\{i \mid v \in K_i \}}$.
	The maximum period of any vertex~$v$ is given by:
	\begin{itemize}
		\item $SG$: $4 (3^m)$,
		\item $HG$: $2(3^m)$,
		\item $PG$: $6(5^m)$,
		\item $MG$: $6(7^m)$.
		\item $SGC$: $6(7^m)$.
	\end{itemize}
\end{conj}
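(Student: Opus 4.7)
The plan is to prove the conjectured formulas by induction on the level $m$, exploiting the self-similar structure of each fractal. Since $\cC_m$ is a finite cyclic group and (by the discussion following Proposition~\ref{prop:restrict}) the maximum periodicity of a vertex first appearing at level $m$ is at most $|\cC_m|$, it suffices to (i) show that $|\cC_m|$ has the conjectured value and (ii) exhibit a vertex realizing this maximum. For (i), I would first settle the base case $m=1$ directly for each of the five fractals by iterating the map $\eta \mapsto (k\eta)^\circ$ on the finite graph $S_1$, recording the eventual cycle of recurrent configurations, and verifying that its length matches the predicted value ($12$, $6$, $30$, $42$, $42$ respectively). This computation, which can be done by hand for $SG$ and $HG$ and by computer for the others, simultaneously pins down the leading constants $4$, $2$, $6$, $6$, $6$.

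For the inductive step, I would analyze the surjective restriction homomorphism $\rho_m : \cC_{m+1} \to \cC_m$ guaranteed by Proposition~\ref{prop:restrict}. Theorem~\ref{thm:division} already yields $|\cC_m| \mid |\cC_{m+1}|$, so the task reduces to showing that $|\ker \rho_m|$ equals the conjectured multiplier ($3$ for $SG$ and $HG$; $5$ for $PG$; $7$ for $MG$ and $SGC$). Any $\xi \in \ker \rho_m$ is a recurrent configuration on $S_{m+1}$ that agrees with the identity of $\cC_m$ when restricted to $S_m$, and hence is determined by its values on the annular region $A_m = S_{m+1} \setminus S_m$. Because $A_m$ is, by self-similarity, a disjoint union of isomorphic ``petals'' attached to $S_m$ through the junction set $B_m$, I would parametrize $\ker \rho_m$ by configurations on a single petal subject to the recurrence constraint in Proposition~\ref{prop:Idf} (that each vertex topples exactly once when $Id_f^{m+1}$ is added). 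A counting argument on these petal configurations, modulo the equal-toppling symmetry of $B_m$ already exploited in Theorem~\ref{thm:invarianceConf}, should yield the conjectured size of the kernel.

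The main obstacle is controlling $|\ker \rho_m|$ precisely. Divisibility is automatic, but equality is fractal-specific: the multipliers do not simply count the number of daughter cells, since for example $HG$ has six daughters but multiplier $3$, and $MG$ has six daughters but multiplier $7$. Thus a uniform combinatorial count fails and the argument must be tailored to each fractal. For $SG$ and $HG$ I expect an explicit description of local sandpile dynamics at each junction, combined with the reflection symmetry that pairs daughters, to produce the factor $3$; for $PG$, $MG$, and $SGC$ a more delicate case analysis will be needed, possibly leveraging the harmonic-extension machinery of the Strichartz spectral theory on p.c.f.\ fractals to identify $\ker \rho_m$ with a small quotient of a cell-level sandpile group. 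Step (ii), exhibiting a vertex that realizes the full periodicity $|\cC_m|$, should fall out once the kernel analysis is in place, by selecting a vertex on the outermost junction ring of level $m$ whose value is sensitive to each coset of $\ker \rho_{m-1}$.
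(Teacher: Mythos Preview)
The statement you are attempting to prove is presented in the paper as a \emph{conjecture}, not a theorem; the authors offer no proof and describe the formulas only as observed ``periodicities which are summarized in the following conjecture.'' There is therefore no paper proof against which to compare your proposal.

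As a standalone plan, your outline correctly isolates the crux of the problem: the inductive step reduces to computing $|\ker \rho_m|$ for the restriction map $\rho_m:\cC_{m+1}\to\cC_m$, and you rightly observe that this multiplier is not dictated by any obvious combinatorial invariant of the fractal (number of daughter cells, size of the symmetry group, etc.). However, this is precisely where the proposal stops being a proof sketch and becomes a wish list. Saying that for $SG$ and $HG$ you ``expect'' a local-dynamics argument combined with reflection symmetry to yield the factor $3$, and that for $PG$, $MG$, and $SGC$ ``a more delicate case analysis will be needed, possibly leveraging the harmonic-extension machinery of the Strichartz spectral theory,'' does not supply a mechanism. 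You have not identified what structure of the petal configurations forces the kernel size to be exactly $3$, $5$, or $7$, nor explained how spectral theory of the Laplacian on p.c.f.\ fractals would interface with the sandpile group at all. The appeal to Proposition~\ref{prop:Idf} (each vertex topples exactly once when $Id_f$ is added) constrains which configurations are recurrent but does not by itself count them, and your parametrization of $\ker\rho_m$ by configurations on a single petal glosses over the fact that recurrence is a global condition on $S_{m+1}$, not a petal-local one.

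In short, the hard content of the conjecture is precisely the determination of $|\ker\rho_m|$, and your proposal names this obstacle without surmounting it. Step~(ii), exhibiting a vertex realizing the full period, is likewise deferred to ``fall out'' of an analysis you have not carried out. Until a concrete argument pins down the kernel for at least one of the five fractals, the plan remains a restatement of the problem rather than a route to its solution---which is consistent with its status in the paper as an open conjecture.
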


\begin{conj}\label{conj:period-equals-cardinality}
	Let ${v \in SG}$ and let $\ds{m = \min\{i \mid v \in SG_i \}}$. Then the period of~$v$ is given by $\left|\cC_m\right|$.
\end{conj}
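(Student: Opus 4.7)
The plan is to argue by induction on $m$, treating the upper bound on the periodicity as given---it already follows from Proposition \ref{prop:restrict} and the discussion preceding Theorem \ref{thm:division}---and concentrating on ruling out the possibility that a proper divisor of $|\cC_m|$ is the exact period of a vertex $v$ that first appears at level $m$.

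For the base case $m=0$ the only vertex is $v_0$, and an explicit enumeration of the recurrent configurations of $S_0$ shows that its period equals $|\cC_0|$. For the inductive step, fix $v \in SG_m \setminus SG_{m-1}$ and write $\cC_m = \langle g\rangle$, where $g$ is the ``add $\eta$ and stabilize'' generator. Let $p$ be the period of $v$, so $p\mid |\cC_m|$. By Proposition \ref{prop:restrict}, the restriction $r : \cC_m \to \cC_{m-1}$ is a surjective homomorphism of cyclic groups with kernel $N$ of order $|\cC_m|/|\cC_{m-1}|$. If $p < |\cC_m|$, the subgroup $H$ of $\cC_m$ stabilizing the value at $v$ is nontrivial; its image under $r$ stabilizes the value at every $v' \in S_{m-1}$, but by the inductive hypothesis applied to any maximum-period $v' \in SG_{m-1} \setminus SG_{m-2}$ that image must already be trivial. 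Hence $H \subseteq N$, and the conjecture reduces to showing that $N$ acts freely on the integer value at $v$.

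To show freeness one wants an explicit generator of $N$: the natural candidate is the recurrent configuration produced by a single additional ``wave'' of topplings at the junction points of $SG_{m-1}$ that separate it from the outer sub-gaskets of $SG_m$. By the self-similarity of $SG$ this wave percolates into the sub-gasket containing $v$ just as a fresh addition of grains would in a smaller copy of $SG$, and combining this with Proposition \ref{prop:Idf}---which guarantees that in the stabilization $\nu \oplus Id_f = \nu$ each vertex topples exactly once---one would like to conclude that the $|N|$ successive iterates of this generator produce $|N|$ distinct values at $v$. The plan is to run a propagation argument in the spirit of Lemma \ref{lem:pattern}: suppose two iterates agree at $v$; trace back through the unique sub-gasket branch containing $v$ toward the nearest junction point and show that coincidence at $v$ forces coincidence of toppling counts at that junction point, contradicting the freeness already obtained at the level-$(m-1)$ step.

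The principal obstacle is the last step---converting ``equal values at $v$'' into ``equal toppling counts at a junction point.'' The value at $v$ depends simultaneously on the odometer at $v$ and on the accumulated flux from its neighbors, and the propagation lemmas of Section 3 are not fine enough to separate those two contributions in the full-odometer, many-recurrent-configuration setting needed here. A promising refinement is to take a Schur-complement of the reduced Laplacian of $SG_m$ along the interface $SG_{m-1}$ and express the action of $N$ on the single coordinate $v$ as multiplication by a specific element of $\Z/|N|\Z$; if that element is a unit, freeness, and hence the conjecture, follows. Making this linear-algebraic reduction precise, and computing the resulting element explicitly using the self-similar recursion of $SG_m$, is what I would expect to consume most of the work.
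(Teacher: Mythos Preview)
The paper does not prove this statement: it is explicitly labelled a \emph{conjecture}, and the only justification offered is the empirical remark that in $SG$ ``there are no vertices that always have the same value once a recurrent configuration is reached.'' There is therefore no proof in the paper to compare your proposal against.

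On the merits of your outline itself, there is a gap earlier than the ``principal obstacle'' you flag. You define $H\le\cC_m$ as the subgroup stabilizing the value at the single vertex $v\in SG_m\setminus SG_{m-1}$, and then assert that $r(H)$ stabilizes the value at \emph{every} $v'\in S_{m-1}$. That implication is not justified: an element $h\in H$ satisfies $(h\oplus c)(v)=c(v)$ for all $c\in\cC_m$, but this says nothing about $(h\oplus c)(v')$ for $v'\neq v$. Since $r$ is a homomorphism, $r(H)=\langle g_{m-1}^{\,p}\rangle$ where $p$ is the period of $v$; for this to be trivial you would need $|\cC_{m-1}|\mid p$, which is exactly what you are trying to establish, not something the inductive hypothesis about periods of vertices in $S_{m-1}$ supplies. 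So the reduction ``hence $H\subseteq N$'' does not go through as written. Past that point your plan is honest about being incomplete: the freeness of the $N$-action on the $v$-coordinate is the heart of the matter, and the Schur-complement idea is plausible but, as you say, the actual computation on the self-similar Laplacian of $SG_m$ would be the real content of any proof. At present the proposal is a reasonable strategic sketch with one unjustified logical step and one openly unfinished step, rather than a proof.
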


We believe \ref{conj:period-equals-cardinality} holds for~$SG$ since there are no vertices that always have the same value once a recurrent configuration is reached.
However, for~$SGC$ and~$HG$, there are vertices that never change their value in all recurrent configurations.

%==============================================================================

\section{Structure of the Sandpile Group}\label{sec:group-structure}

We can gain insight on the structure of the Sandpile group by using the Smith Normal Form of the reduced graph Laplacian (see \cite{Lorenzini}).
The Smith Normal Form is a diagonal integer matrix in which the nonzero diagonal entries give the presentation of the Sandpile group.
For example, suppose the following matrix were the Smith Normal Form of a graph's reduced Laplacian:
\begin{equation*}
	\begin{bmatrix}
		5 & 0 & 0\\
		0 & 10 & 0\\
		0 & 0 & 0
	\end{bmatrix}\!.
\end{equation*}
Then the graph's Sandpile group would be given by~${\Z_5 \times \Z_{10}}$.

For the fractals we consider, we need to decide how the sink vertex should be attached to the graph approximations and how this vertex interacts with the graph's boundary vertices.
We experiment with two boundary conditions:
\begin{description}
	\item[Normal] The boundary points of the fractal are each connected to a separate sink vertex. In $SG$ we include two edges from each boundary point to the sink; in $SGC$ was include one edge from each boundary point to the sink.
	\item[Sinked] The boundary points of the fractal are collapsed to form a single sink vertex.
\end{description}
The normal boundary condition has been used in other work, such as \cite{daerden}.
However, by considering instead the sinked boundary condition, we are able to conjecture a closed-form formula for the order of the Sandpile group of~$SG$.

The order of Sandpile group associated to a directed graph is given by the number of spanning trees rooted at a fixed vertex (\cite{Levine, Lionel}).
In~$SG$ we calculate the number of spanning trees of the sequence of graph approximations to~$SG$ to obtain the following conjecture.
\begin{conj}
	Using the sinked boundary condition, the order of the Sandpile group of~$SG$ is given by the product ${2^{f_n} \, 3^{g_n} \, 5^{h_n}}$, where $f_n$, $g_n$, and $h_n$ are
	\begin{equation*}
		f_n = \frac{1}{2} (3^n - 1), \qquad g_n = \frac{1}{4} (3^{n+1} - 6n - 3), \qquad h_n = \frac{1}{4} (3^n + 6n - 1).
	\end{equation*}
\end{conj}
This conjecture is based on results from \cite{Anema}, which uses a modified version of Kirchhoff's matrix tree theorem to calculate the number of spanning trees as a product of the nonzero eigenvalues of the graph's probabilistic Laplacian.

In the following tables, we identify the structure of the Sandpile group for several levels of the Sierpinski Gasket and for its cell graph.

\setlength{\tabcolsep}{0.25em}
\begin{center}
	\footnotesize
	\begin{tabular}{|c l | c l|}
		\hline
		\multicolumn{4}{|c|}{Normal Boundary Condition}\\
		\hline
		$SG_0$ & $\Z_2 \times {(\Z_5)}^2$ & $SGC_1$ & ${(\Z_4)}^2$\\
    $SG_1$ & ${(\Z_2)}^2 \times {(\Z_{19})}^2$ & $SGC_2$ & $\Z_5 \times {(\Z_{17})}^2$\\
		$SG_2$ & ${(\Z_2)}^5 \times {(\Z_3)}^3 \times \Z_5 \times {(\Z_7)}^2 \times {(\Z_{11})}^2$ & $SGC_3$ & ${(\Z_3)}^3 \times {(\Z_4)}^2 \times {(\Z_5)}^3 \times {(\Z_{19})}^2 \times \Z_{25}$\\
		&& $SGC_4$ & ${(\Z_3)}^9 \times {(\Z_5)}^9 \times {(\Z_9)}^3 \times {(\Z_{25})}^3 \times \Z_{125} \times {(\Z_{353})}^2$\\
		\hline
		\hline
		\multicolumn{4}{|c|}{Sinked Boundary Condition}\\
		\hline
		$SG_1$ & $\Z_2 \times {(\Z_5)}^2$ & $SGC_2$ & $\Z_5 \times {(\Z_8)}^2$\\
    $SG_2$ & ${(\Z_2)}^4 \times {(\Z_3)}^3 \times \Z_5 \times {(\Z_{25})}^2$ & $SGC_3$ & ${(\Z_3)}^3 \times {(\Z_5)}^3 \times \Z_{25} \times {(\Z_{49})}^2$\\
		$SG_3$ & ${(\Z_2)}^{13} \times {(\Z_3)}^9 \times {(\Z_5)}^3 \times {(\Z_9)}^3 \times \Z_{25} \times {(\Z_{125})}^2$ & $SGC_4$ & \parbox[t]{6cm}{${(\Z_3)}^9 \times {(\Z_5)}^9 \times {(\Z_9)}^3 \times {(\Z_{16})}^2 \times {(\Z_{17})}^2$\\$\times {(\Z_{25})}^3 \times \Z_{125}$}\\
		\hline
	\end{tabular}
\end{center}

%-------------------------------------------------------------------------------

\section{Identities}\label{sec:identities}

There are multiple works that deal with characterizing the identity element of the Sandpile group, see \cite{DharAlgebraicAspects, Caracciolo, BorgneIdentity, DartoisIdentity}.
In this section we find the identity configuration for~$SGC$ and provide conjectures for the identity element of several other fractals.

\begin{figure}[!ht]
	\centering
	\begin{subfigure}{0.40\textwidth}
		\includegraphics[scale = 0.40]{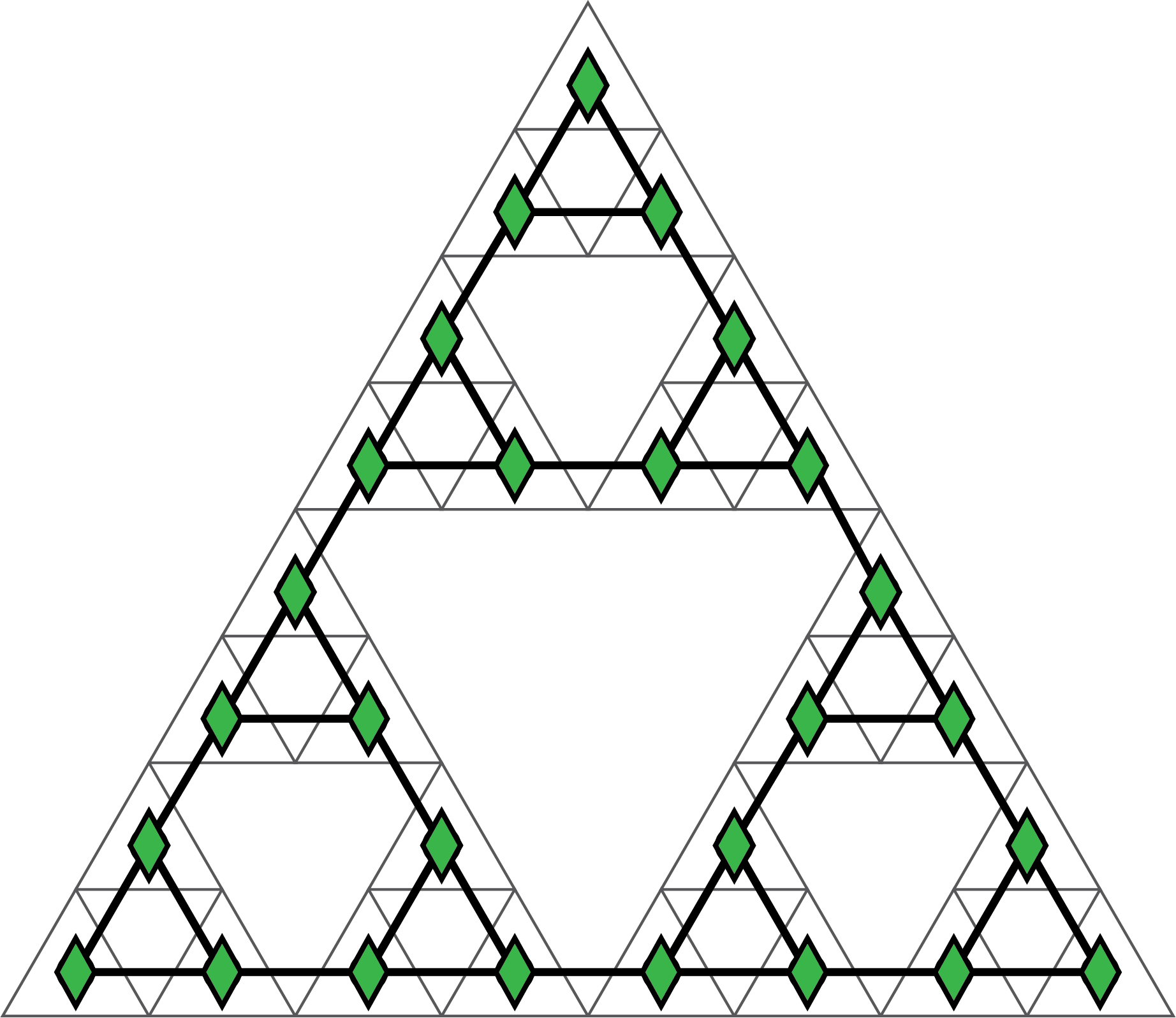}
		\caption{}
		\label{fig:SGCell_Id}
	\end{subfigure}
	\begin{subfigure}{0.36\textwidth}
		\includegraphics[scale = 0.36]{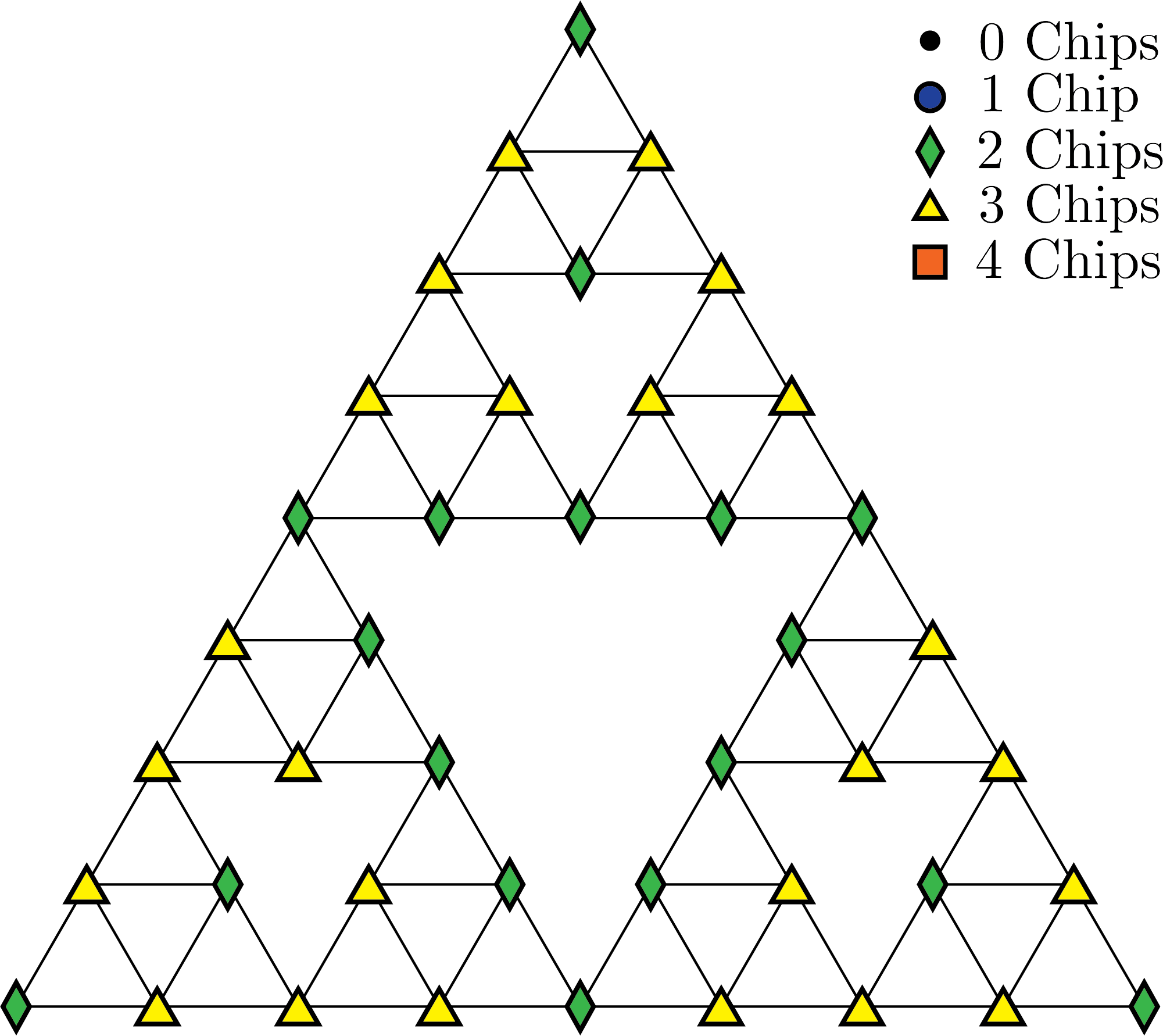}
		\caption{}
		\label{fig:SG_Id}
	\end{subfigure}
	\caption{Identities of~(a)~$SGC$ and (b)~$SG$.}
\end{figure}

The identity elements of these fractals exhibit clear patterns (we use the normal boundary condition in analyzing these identity configurations).
$SGC$'s identity has $2$~chips on each vertex.
$SG$'s identity has $2$~chips on the ``inside'' vertices of every subgraph $2$~chips on the boundary vertices, and $3$~chips on all other vertices.
Next we prove that $SGC$'s identity element is indeed this $2$-chip configuration.

\begin{figure}[!ht]
	\centering
	\includegraphics[scale=0.4]{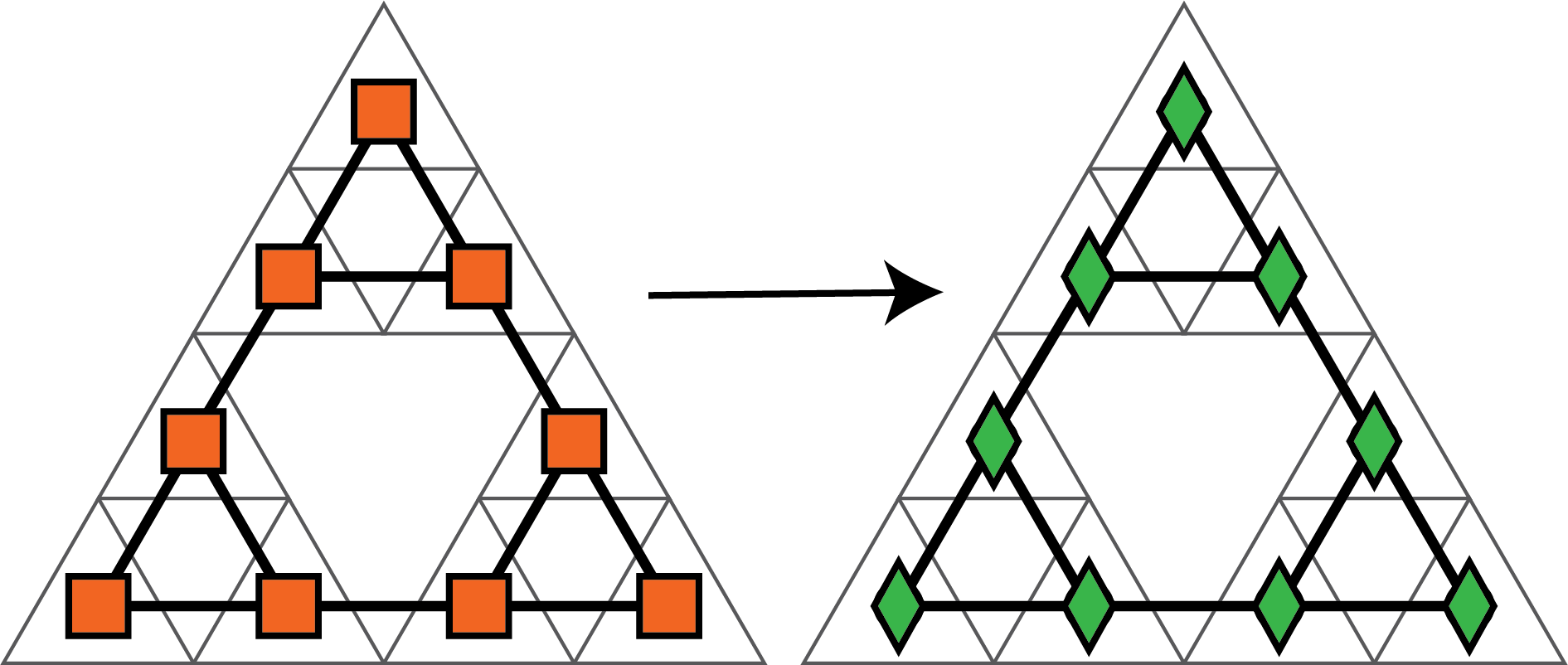}
	\caption{A $4$-chip configuration stabilizes to a $2$-chip configuration.}
	\label{fig:sketch0}
\end{figure}

\begin{thm}
  \label{thm:identity}
  For all levels~${n > 0}$, the identity of~$SGC$'s Sandpile group with the normal boundary condition is the configuration with $2$~chips on each vertex.
\end{thm}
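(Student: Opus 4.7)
The plan is to characterize $\sigma$, the configuration with $\sigma(v) = 2$ for every cell $v$, as the unique recurrent idempotent configuration of $SGC_n$, which by definition is the identity of the sandpile group.

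First, observe that every cell has degree $3$: interior cells have three cell-neighbors, and each of the three corner cells has two cell-neighbors together with its single sink edge. Thus $\sigma = \sigma_{\max}$, the max-stable configuration defined by $\sigma_{\max}(v) = \deg(v) - 1$. Recurrence is immediate from the burning test: the three corner cells satisfy $\sigma(v) = \deg(v) - 1$ and are adjacent to the initially burned sink, so they burn first; since $\sigma \equiv 2$ everywhere and $SGC_n$ is connected, the burning propagates until every cell is consumed.

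The main step is idempotency, $(\sigma + \sigma)^\circ = \sigma$. A direct balance at each vertex shows that this is equivalent to producing a non-negative integer odometer $T : V \to \Z^\oplus$ solving the discrete Poisson problem $L T = 2 \cdot \mathbf{1}$, where $L$ is the reduced Laplacian of $SGC_n$ with the sink deleted; toppling each cell $v$ exactly $T(v)$ times then converts $2\sigma$ back to $\sigma$. The rational solution $T = 2 L^{-1} \mathbf{1}$ exists, is unique, and is non-negative since $L^{-1}$ has non-negative entries, being the Green's function of the random walk on $SGC_n$ killed at the sink. The remaining content is integrality of $T$, which I would prove by induction on $n$.

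The base case $n = 1$ is the reduction sketched in Figure \ref{fig:sketch0}: on $SGC_1 \cong K_3$ with a sink edge attached to each vertex, toppling each cell exactly twice takes the all-$4$ configuration to the all-$2$ configuration. For the inductive step, decompose $SGC_n$ into three copies of $SGC_{n-1}$ meeting at three junction cells; the three-fold rotational symmetry forces $T$ to take a common value on those three junctions, and firing them together reduces the dynamics inside each sub-copy to an instance governed by the inductive hypothesis. The main obstacle I foresee is the bookkeeping at the junction cells, since they have a different local neighborhood in $SGC_n$ than in a detached $SGC_{n-1}$: extra chips flow into a neighboring sub-copy rather than into a sink. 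The plan is to show that the rotational symmetry forces the chip exchange across each junction to be exactly what a sink edge would absorb in the detached picture, so that the stabilization inside each sub-copy terminates at its all-$2$ configuration and the junction topple counts patch together consistently across sub-copies.
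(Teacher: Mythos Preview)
Your setup is clean: since every cell has degree $3$, the all-$2$ configuration is $\sigma_{\max}$, which is automatically recurrent; and idempotency $(\sigma+\sigma)^\circ=\sigma$ amounts to exhibiting a non-negative integer odometer $T$ with $LT=2\cdot\mathbf{1}$. The gap is in your proposed inductive step. Your claim that ``rotational symmetry forces the chip exchange across each junction to be exactly what a sink edge would absorb in the detached picture'' is false. By the dihedral symmetry of the all-$4$ start, the two cells on either side of an inner junction edge carry the \emph{same} odometer value, so the net flow across that edge is zero --- whereas a sink edge absorbs a strictly positive number of chips. Hence the restriction of $T$ to one sub-copy does not satisfy the detached $SGC_{n-1}$ equations at its inner corners, and the inductive hypothesis does not apply as stated. (Minor point: the three sub-copies are joined by three \emph{edges}, not by shared cells; six corner cells are involved, in three adjacent pairs.)

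What actually happens if you stabilize each sub-copy as though detached is that each reaches all-$2$'s and each corner topples $2\cdot 3^{n-2}$ times, but the chips sent out the two inner ``fake sinks'' land on the neighboring sub-copy. You are left with $2+2\cdot 3^{n-2}$ chips on each of the six inner-junction cells, and stabilizing \emph{that} configuration is a genuinely new problem not covered by the hypothesis on $SGC_{n-1}$. The paper runs into precisely this obstruction and gets past it by strengthening the induction: it decomposes the level-$(n+1)$ graph into \emph{nine} level-$(n-1)$ pieces, carries the corner-odometer value $2\cdot 3^{k-1}$ as part of the inductive hypothesis, and invokes an auxiliary structural result (\cref{lemma:Ring_of_Triangles} and its extension \cref{prop:A_Larger_Triangle}) showing that a ring of sub-triangles with $m\ge 3$ chips on each outer corner and $2$ elsewhere stabilizes back to all-$2$'s with each outer corner toppling exactly $m-2$ times. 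That ring lemma, together with the two-level decomposition, is the missing ingredient in your plan.
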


Letting~$e$ denote the configuration with $2$~chips on each vertex, we show that ${e \oplus e = e}$.
This process entails stacking two $2$-chip configurations on top of each other, resulting in $4$~chips on each vertex, and then showing that this configuration stabilizes to the $2$-chip configuration.
Since we can topple chips in any order, we just need to find an order of topplings that is simple enough for us to explain.

\begin{figure}[!h]
	\centering
	\includegraphics[scale=0.5]{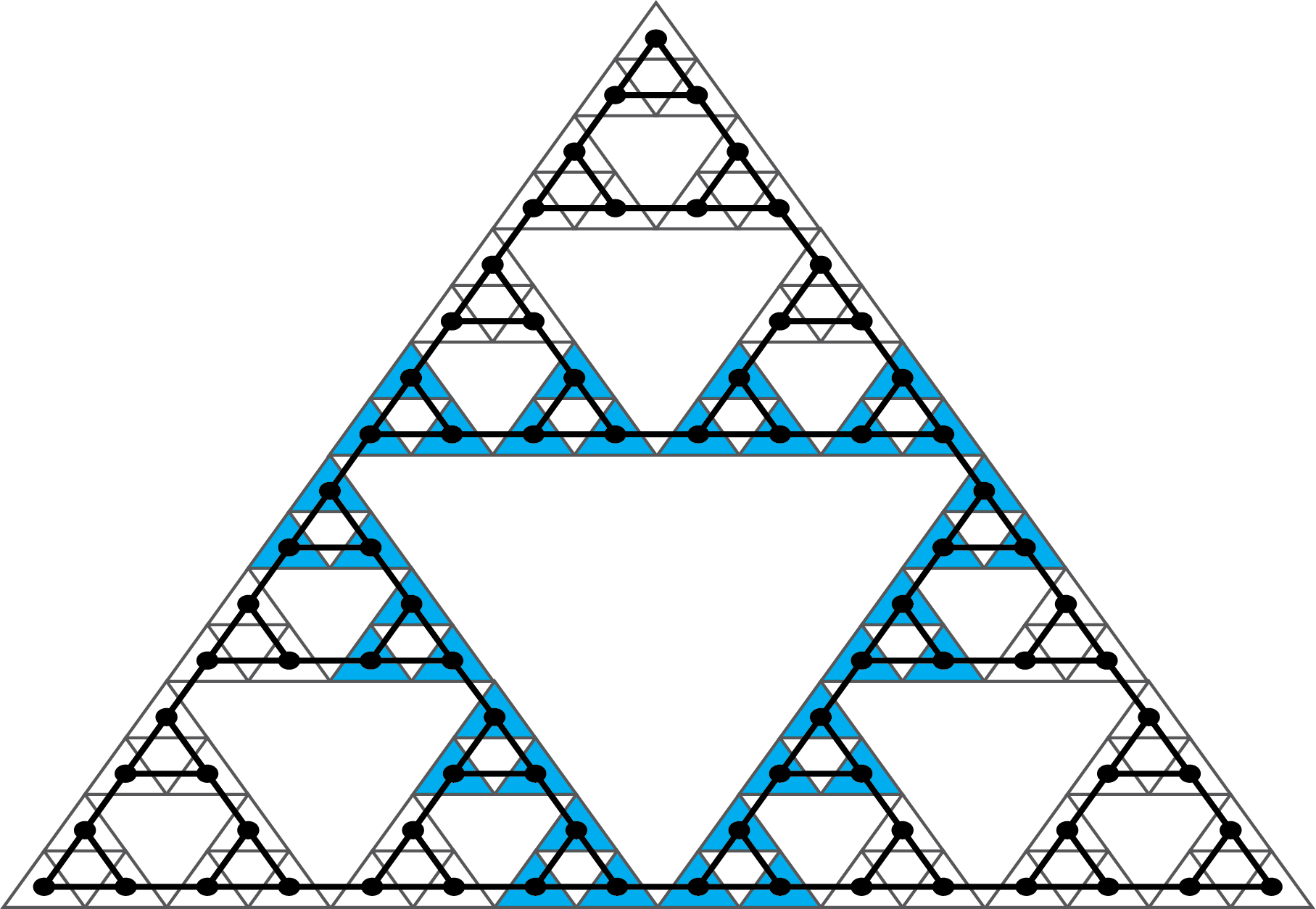}
	\caption{The inner ring.}
	\label{fig:sketch1}
\end{figure}

Before beginning the proof, we first examine the structure of the cell graph to understand how the subgraphs interact with each other.
A cell graph contains an inner ring of level $1$~triangles.
We'll need a property concerning the behavior of chips on this ring during the stabilization process.

\begin{lemma}[Ring of Triangles]
	\label{lemma:Ring_of_Triangles}
	Consider a chain of~$t$ level~$1$ triangles in which each outer vertex has one edge to the sink, as in \ref{fig:sketch2}.
	Place $2$~chips on every inner vertex of each triangle and $3$~chips on the outer vertex of each triangle.
	The stable configuration consists of $2$~chips on each vertex.
	Further, every corner vertex topples exactly once, resulting in $t$~chips toppling into the sink.

	Moreover, we can start with any number of chips~${m \geq 3}$ on each of the corner vertices.
	The stable configuration has $2$~chips on each vertex.
	Every corner vertex topples exactly ${m - 2}$~times, and ${t(m - 2)}$~chips in total topple into the sink.
\end{lemma}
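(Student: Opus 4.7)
The plan is to exploit the Abelian property of the sandpile: since the final stable configuration is independent of the firing order, it suffices to exhibit one valid firing schedule that produces the claimed all-$2$ state while tallying the corner topplings and the flow into the sink.

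For the base case $m = 3$, I would fire in two rounds. Each corner vertex has degree $3$ (its two intra-triangle neighbors plus its sink edge) and starts with exactly $3$ chips, so every corner is ready to fire. Round one fires each corner once, sending one chip to the sink and one chip to each of its two inner neighbors. After round one, every corner holds $0$ chips, and each inner vertex -- which began with $2$ chips and is adjacent to exactly two corners (one in each of the two triangles meeting at it) -- has accumulated two extra chips and now holds $4$, matching its degree of $4$ (two edges in each of the two triangles containing it). Round two fires each inner vertex once; each firing redistributes one chip to each of its four neighbors (two corners and two inner vertices). Bookkeeping: each corner receives one chip from each of its two inner neighbors, returning it to $2$, and each inner receives one chip from each of its two inner neighbors, also returning it to $2$. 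The configuration is now stable at $2$ on every vertex, and only round one interacted with the sink, delivering exactly $t$ chips.

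For the general case $m \geq 3$, I would induct on $m$. The base case $m = 3$ is handled above. For the step from $m$ to $m+1$, the inductive hypothesis says that the $m$-configuration stabilizes to all-$2$ after each corner has toppled $m-2$ times and $t(m-2)$ chips have entered the sink. Adding one extra chip to each corner on top of this stable state restores exactly the $m = 3$ starting configuration, so replaying the two-round firing schedule adds one more toppling per corner and $t$ additional chips into the sink. The totals become $(m-2) + 1 = (m+1) - 2$ topplings per corner and $t(m-2) + t = t((m+1)-2)$ chips in the sink, completing the induction. (Equivalently, one could write down the linear system given by $\eta_{\mathrm{final}}(v) = \eta_{\mathrm{init}}(v) - \deg(v)\,o(v) + \sum_{u \sim v} o(u)$ with $\eta_{\mathrm{final}} \equiv 2$ and solve directly, obtaining $o_c = o_i = m-2$ by symmetry.)

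The main obstacle is a careful identification of the local graph structure: confirming that corners really have degree $3$ and inner vertices have degree $4$ with the claimed incidence pattern, and verifying that the chain either closes into a ring or that any genuine endpoints of the chain are compatible with the symmetric accounting. Once these local degree counts are pinned down, the argument is essentially the bookkeeping of the two-round scheme, and everything else follows from the Abelian property.
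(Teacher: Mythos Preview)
Your two-round firing schedule (fire all corners, then fire all inner vertices) followed by iteration on $m$ is exactly the paper's argument. The one real error is the graph structure you assumed: in this chain, adjacent level-$1$ triangles are joined by an \emph{edge} between an inner vertex of one and an inner vertex of the next, rather than by sharing a vertex. Hence each inner vertex lies in a single triangle and has degree~$3$ (one corner, the other inner vertex of its own triangle, and one inner vertex of a neighboring triangle), not~$4$, and is adjacent to exactly one corner, not two. With that correction your bookkeeping still goes through: after round one each inner vertex has $2+1=3$ chips (not $4$), and after round two every vertex sits at~$2$, with each corner having toppled once and $t$ chips sent to the sink. You rightly flagged the degree identification as the main obstacle; once it is fixed, your proof coincides with the paper's.
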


\begin{figure}[H]
	\centering
	\includegraphics[scale=0.5]{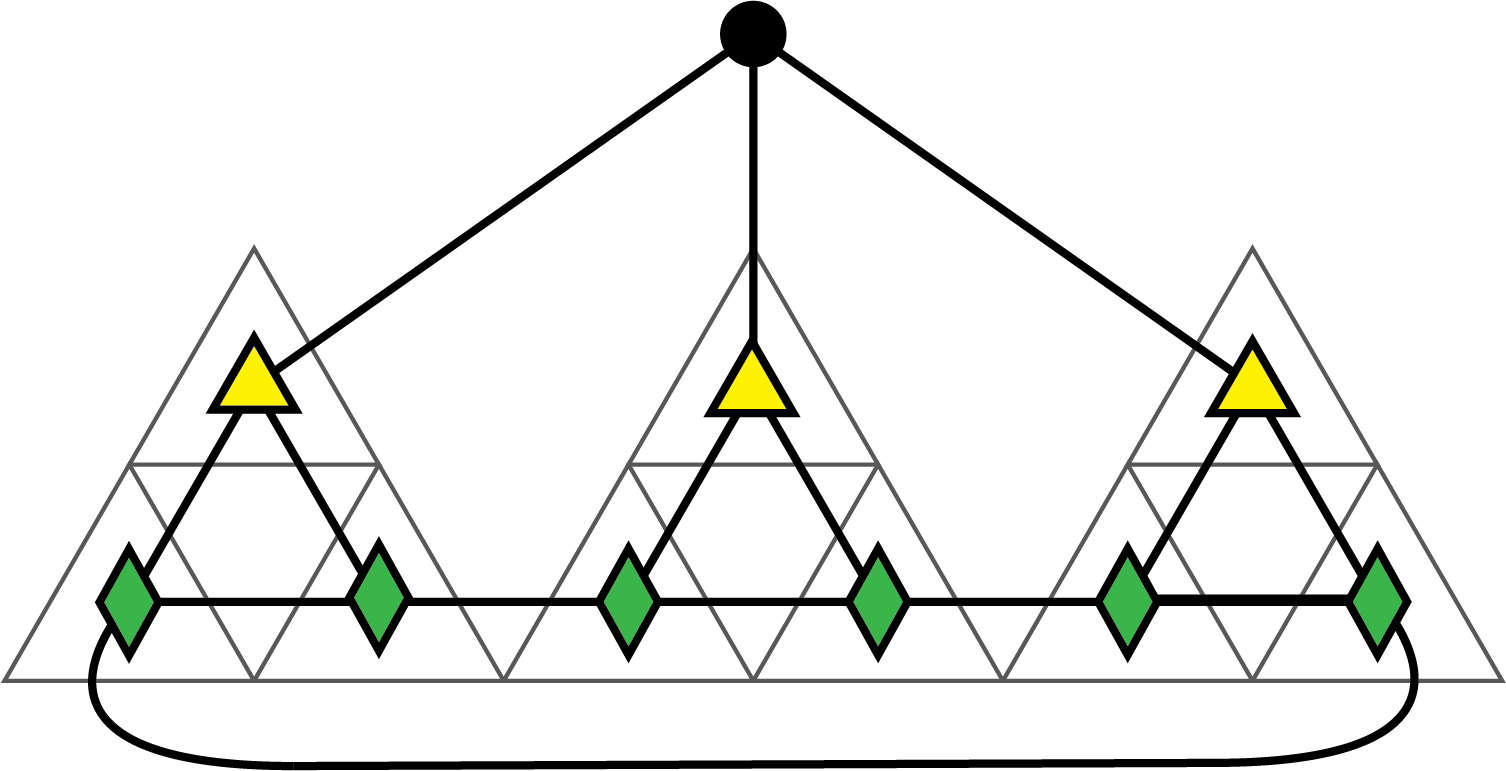}
	\caption{3 chips on the corner vertices, 2 chips on the inside vertices.}
	\label{fig:sketch2}
\end{figure}

\begin{proof}
	First topple all of the corner vertices, losing one chip each to the sink and leaving us with $0$ chips on these corners and $3$ chips on all of the other vertices.
	Now topple these inner vertices, producing the desired $2$ chip configuration.

	Now suppose that we start with $m$~chips on each corner vertex instead of three.
	We can topple in any order, so we can apply the $3$~chip case iteratively.
	Each vertex topples exactly ${m - 2}$ times, so ${t(m - 2)}$~chips in total topple into the sink.
\end{proof}

Next we want to identify the behavior of a larger triangle, as each level~${n + 1}$ graph contains an inner ring of six level~${n - 1}$ graphs (see \ref{fig:sketch4}).
We show inductively that the same result holds for triangles of level~${n \geq 1}$.
That is, if we start with $3$~chips on one corner and $2$~chips on all other vertices, we end up with $2$~chips everywhere except for the other corners, each of which have $1$~chip.
\begin{figure}[H]
	\centering
	\includegraphics[scale=0.5]{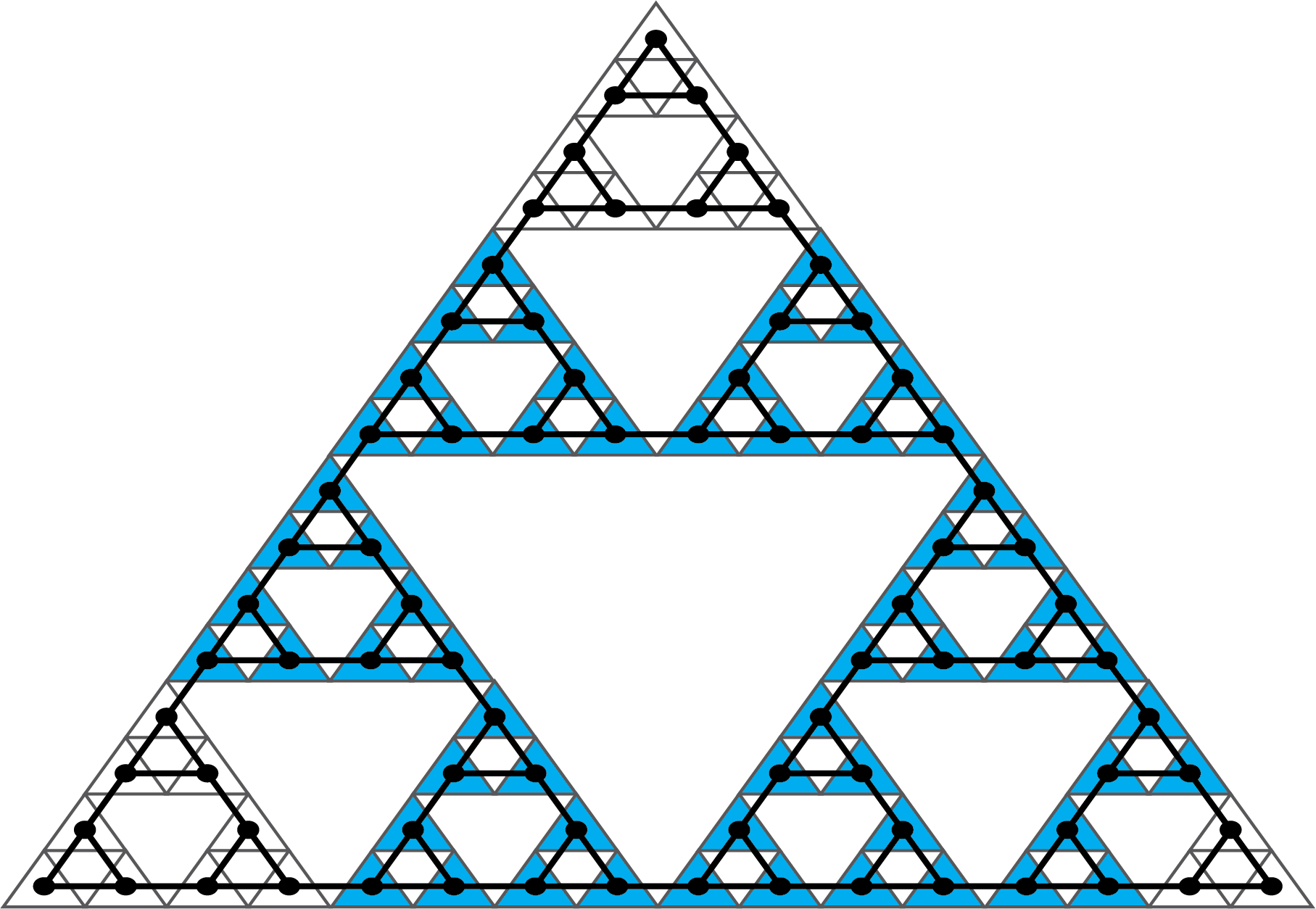}
	\caption{Six inner level~$n$ subgraphs.}
	\label{fig:sketch4}
\end{figure}

\begin{prop}[A Larger Triangle]
	\label{prop:A_Larger_Triangle}
	\ref{lemma:Ring_of_Triangles} holds for all cell graphs of level~${n \geq 1}$.
	That is, consider a chain of~$t$ level~${n \geq 1}$ cell graphs in which each corner vertex has a single edge to the sink.
	Place ${m \geq 3}$~chips on each of these corner vertices and $2$~chips on each of the other vertices.
	The stable configuration then consists of $2$~chips on each vertex, and the odometer of each outer vertex is~${m - 2}$.
\end{prop}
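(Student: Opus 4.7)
The plan is to induct on the level $n \geq 1$, with base case $n = 1$ given by \cref{lemma:Ring_of_Triangles}. For the inductive step, I would assume the proposition holds for all levels strictly less than $n$ and analyze a chain of $t$ level-$n$ cell graphs carrying $m \geq 3$ chips on each corner vertex and $2$ chips on every other vertex.

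The key structural fact I plan to exploit is that each level-$n$ cell graph decomposes into three level-$(n-1)$ sub-cell graphs joined pairwise at three junction vertices. Within the chain, each level-$n$ graph contributes one sink-connected corner together with two further ``big'' corners that are identified with the corresponding corners of the adjacent level-$n$ graphs; under the sub-cell decomposition, these three big corners sit in three different level-$(n-1)$ sub-cell graphs, while the three internal junction vertices pair the remaining sub-cell corners.

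Using abelianness of the sandpile, I would mimic the toppling order of \cref{lemma:Ring_of_Triangles}. First, topple each sink-connected corner of the chain once: this sends one chip per corner to the sink and deposits one chip onto each of its two internal neighbors in the containing sub-cell graph, raising those neighbors to $3$ chips. Second, view the resulting intermediate configuration at the scale of the inner level-$(n-1)$ sub-cell graphs: these sub-cell graphs, threaded through the junction vertices, form a smaller chain whose corners now carry the extra chips from the first round and whose previously toppled outer corners act as effective sinks. Invoking the inductive hypothesis on this inner chain stabilizes the interior back to $2$ chips per vertex and returns the correct number of chips to the outer corners. Iterating this procedure $m - 2$ times reduces each sink-connected corner from $m$ chips to $2$, so that the odometer of each corner is exactly $m - 2$ and a total of $t(m-2)$ chips fall into the sink.

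The main obstacle I expect is making the inductive reduction precise at the junction vertices. After one round of outer-corner topplings, the restriction of the configuration to the interior of a single level-$n$ graph is close to---but not literally identical to---the hypothesis of \cref{prop:A_Larger_Triangle} at level $n - 1$: the chip counts at shared corners between adjacent sub-cell graphs, and the effective degrees at the outer level-$n$ corners (which simultaneously play the role of sinks for the inner chain and of chain-corners for the outer chain), must be matched to the inductive statement. Formalizing this matching, and verifying that outer-corner topplings commute with interior stabilizations across all $m - 2$ rounds in the intended way, is the technical heart of the argument.
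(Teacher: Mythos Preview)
Your plan has a structural mismatch that you partly anticipate but understate. After you topple a sink-connected corner $v$ of a level-$n$ triangle once, the two chips it distributes land on the two \emph{internal neighbors} of $v$ inside the level-$(n-1)$ sub-cell graph containing $v$; for $n\geq 3$ these neighbors are interior vertices of that sub-cell graph, not corners. The corners of that sub-cell graph are $v$ itself (now carrying $m-3$ chips) and the two junction vertices joining it to the other two level-$(n-1)$ pieces, which still carry $2$ chips. So the intermediate configuration you obtain is ``$2$ everywhere except two interior vertices with $3$ and one corner with $m-3$,'' which is not an instance of the inductive hypothesis (that hypothesis requires $\geq 3$ chips on \emph{corner} vertices and $2$ elsewhere). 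Your proposed inner chain of level-$(n-1)$ sub-cell graphs therefore does not carry the right boundary data, and the outer corners, still holding $m-3$ chips, are not inert sinks either. Getting from this configuration back to something the inductive hypothesis covers is exactly the missing step, and it is not a bookkeeping detail: it amounts to understanding how a single excess chip at one corner of a level-$k$ triangle propagates through its interior to the other corners.

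The paper isolates precisely that propagation as a separate lemma: for a single level-$k$ cell graph with all three corners connected to the sink, starting from $3$ chips on one corner and $2$ elsewhere, stabilization yields $2$ on the starting corner, $1$ on each of the other two corners, and $2$ elsewhere (each corner firing once). This statement is proved by a clean induction on $k$, using the three-fold decomposition of a level-$(k+1)$ graph into level-$k$ pieces. Once this single-triangle fact is available, the chain version for arbitrary $t$ and $m$ follows immediately by the same two-step toppling as in \cref{lemma:Ring_of_Triangles}, with each level-$n$ triangle behaving exactly like a level-$1$ triangle did there. In short, the paper inducts on the level of a single triangle rather than on the level of the whole chain; this is the idea your sketch is missing.
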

\begin{proof}
	\begin{figure}[H]
		\centering
		\includegraphics[scale=0.55]{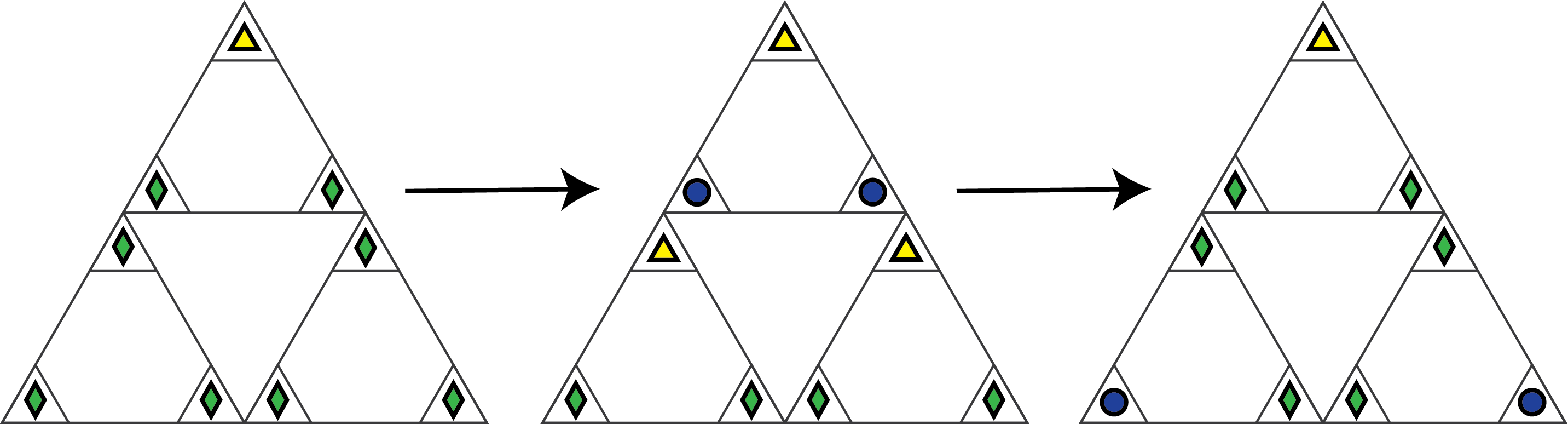}
		\caption{Toppling from 3 chips on one corner.}
		\label{fig:sketch5}
	\end{figure}
	Assume that the result holds for a level~$n$ cell graph starting with $3$~chips on the outer vertices.
	Apply this case to the top third of a level~${n + 1}$ cell graph to leave $2$~chips on every vertex of the top subgraph except the corners, which each have $1$~chip.
	Then apply the level~$n$ case again to the two lower thirds to obtain the desired result, noting that when the top vertex of each of these subgraphs topple, one chip is returned to a corner of the top third (see \ref{fig:sketch5}).
	The original claim now follows using the same argument as in \ref{lemma:Ring_of_Triangles}.
\end{proof}

Now we move on to the main proof of \ref{thm:identity}: that the identity of a level~$n$ cell graph is the configuration with $2$~chips on every vertex.

\begin{proof}
	The first few levels are easily verified by hand.
	For the induction case we construct a sequence of topplings that stabilizes the $4$-chip configuration.
	We also need to keep track of the odometer functions of the boundary vertices.
	Assume that for all~${k \leq n}$, the identity element of the level~$k$ cell graph consists of $2$~chips on each vertex.
	Further assume that when the configuration stabilizes from $4$~chips everywhere, the odometer of each boundary vertex is~${2 \cdot 3^{k-1}}$.
	Note that the vertex odometers can be obtained by counting the number of vertices, which is~$3^k$, multiplying by~$2$ for the $2$~chips per vertex that must topple, and dividing by~$3$ for symmetry.

	\begin{figure}[H]
		\centering
		\includegraphics[scale=0.4]{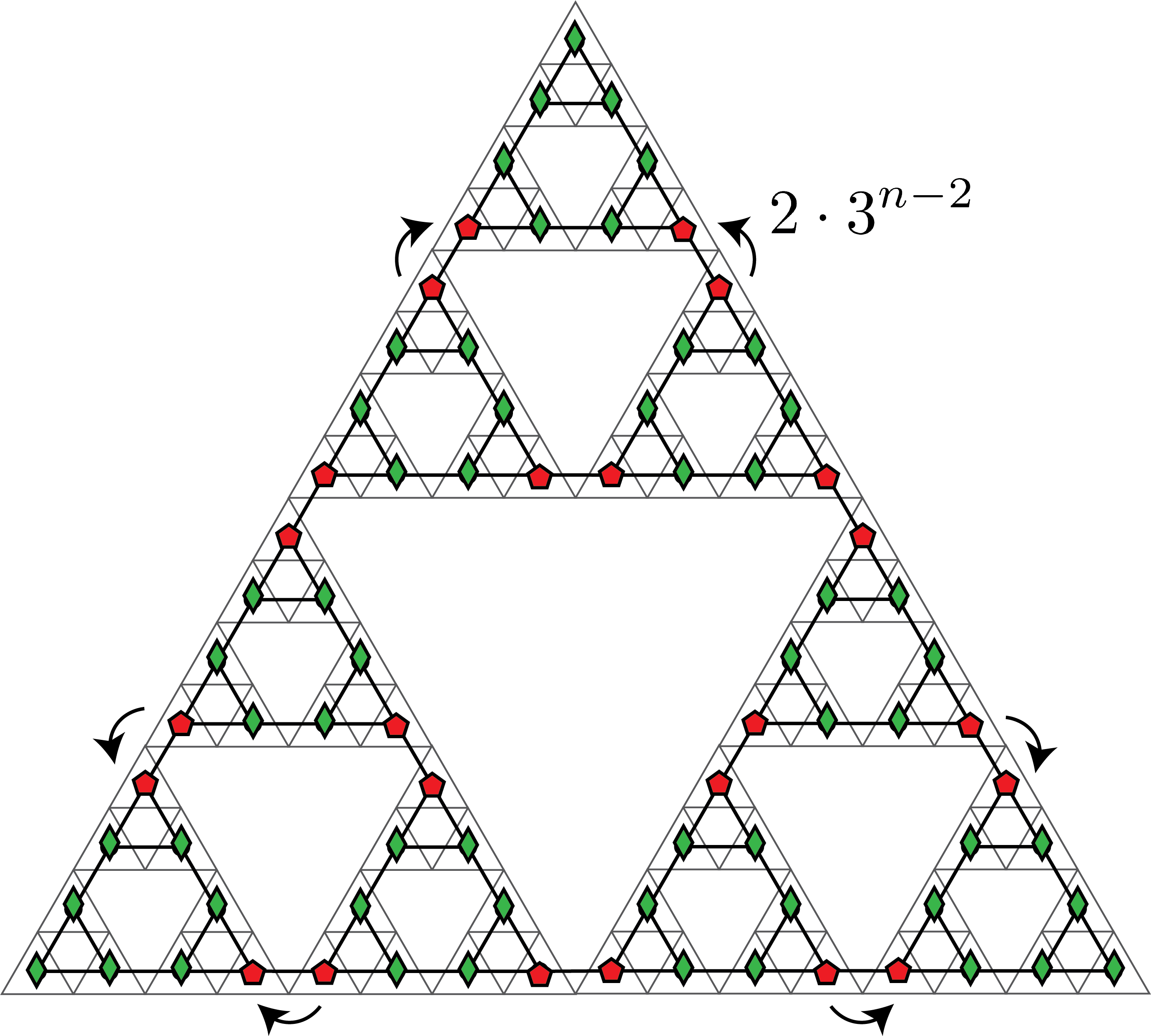}
		\caption{
			Neighboring vertices after subgraph stabilization, where each pentagon represents ${2 + 2\cdot3^{n-2}}$~chips.
			Applying \ref{prop:A_Larger_Triangle}, a stack of ${2\cdot3^{n-2}}$ chips are moved to the outer level~${n-1}$ subgraphs.
		}
		\label{fig:sketch8}
	\end{figure}

	Consider a level~${n + 1}$ subgraph with $4$~chips on each vertex.
	We break down the level~${n + 1}$ graph into nine level~${n - 1}$ subgraphs, the identity of each we have assumed to be $2$~chips on every vertex.
	Stabilize each of these level~${n - 1}$ subgraphs individually.
	Ignore any chips deposited on the corners of each subgraph by its neighbors, just stabilize the initial chips on each subgraph.
	Since each of these subgraphs has $4$~chips on every vertex, each stabilizes to $2$~chips everywhere and loses ${2 \cdot 3^{(n-1)-1} = 2 \cdot 3^{n-2}}$~chips from each of its corner vertices.

	Our graph now has an inner ring of six level~${n - 1}$ subgraphs, each of which has $2$~chips on every vertex except for its corners, which each have ${2 + 2 \cdot 3^{n-2}}$~chips (see \ref{fig:sketch8}).
	We want to show that this inner ring stabilizes to $2$~chips everywhere.
	First we apply \ref{prop:A_Larger_Triangle} to clear the chips from the boundary of this ring, removing ${2 \cdot 3^{n-2}}$~chips from each such vertex.

	We continue clearing chips off of the six inner triangles, but we note that by the symmetry of these graphs it is sufficient to show that a ring of three such triangles stabilizes to the $2$-chip configuration.
	We assumed that the identity of the level~$n$ graph is the $2$-chip configuration, and since we can topple chips in any order, we have that this inner ring stabilizes to the desired $2$-chip configuration.
	Thus we have $2$~chips on every vertex of the level~${n + 1}$ graph, except for the vertices neighboring this inner ring, which each have ${2 + 8 \cdot 3^{n-2}}$~chips (see \ref{fig:sketch10}).

	\begin{figure}[H]
		\centering
		\includegraphics[scale=0.62]{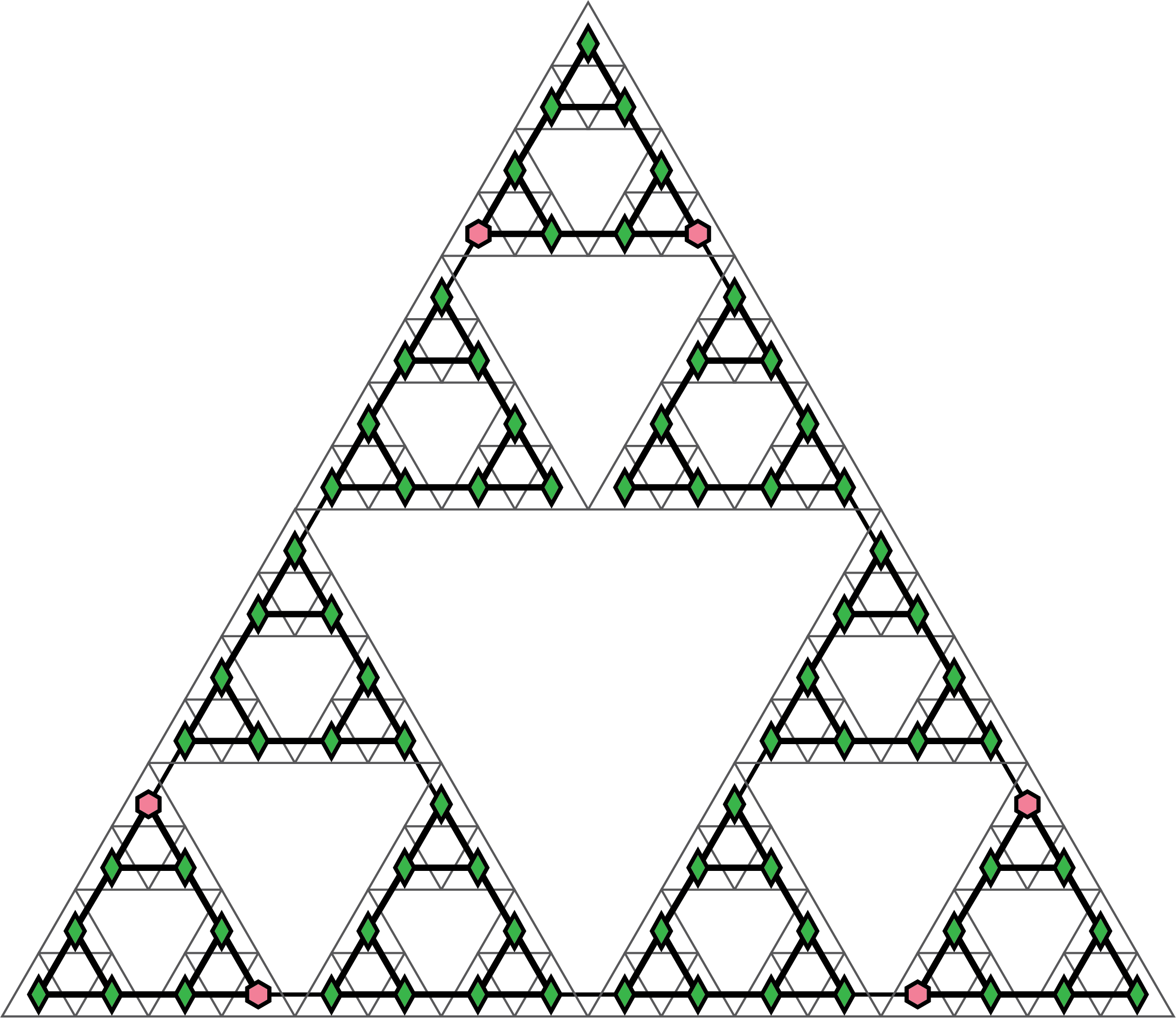}
		\caption{
			The inner level~$n$ cells with $2$~chips on each vertex.
			The hexagons indicate a stack of ${2 + 8\cdot3^{n-2}}$~chips.
		}
		\label{fig:sketch10}
	\end{figure}

	Consider toppling each vertex that has excess chips in \ref{fig:sketch10} exactly once. Then each triangle in the inner ring would have $3$~chips on its outer boundary vertex, and stabilizing just this inner ring produces~$2$ chips on every vertex of the inner ring by \ref{prop:A_Larger_Triangle}.
	Thus we can ignore this inner ring when toppling the remaining vertices.
	Then by our strong induction hypothesis, if these vertices had~${2 + 2\cdot3^{n-2}}$ chips, then we would end up with two chips on all vertices.
	But since $8$ is a multiple of~$2$, we simply can repeat this toppling process four times to end up with the same configuration.
	Hence we obtain the desired configuration of a level~${n + 1}$ triangle with two chips on each vertex.
\end{proof}

Finally, we computationally find the identity elements for low level graph approximations for several other fractals.
Images of the identity configurations for~$MG_2$, $MG_3$, $PG_2$, and $HG_2$ follow.
For~$MG$ we use two edges from each boundary vertex to the sink, the same boundary condition used for~$SG$.
For~$PG$ and~$HG$ we connect the boundary vertices to the sink using a number of edges such that the degree of each boundary vertex is the same as that of the junction vertices: each boundary vertex of~$PG$ has four edges to the sink, and each boundary vertex of~$HG$ has five edges to the sink.

\begin{figure}[H]
	\centering
	\includegraphics[scale=0.45]{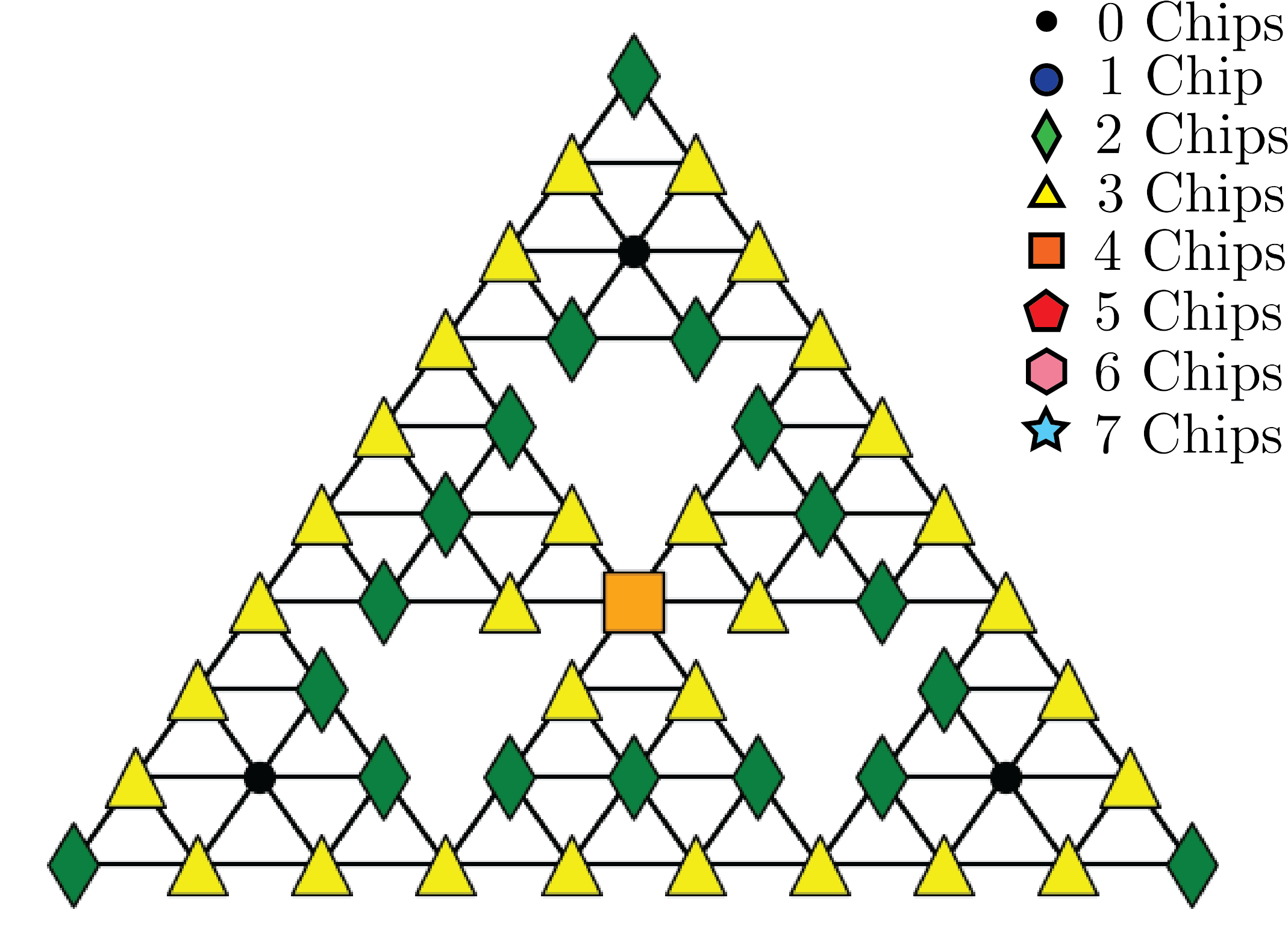}
	\caption{Identity element for~$MG$ level~$2$.}
	\label{fig:MG_Id2}
\end{figure}

\begin{figure}[H]
	\centering
	\includegraphics[scale=0.28]{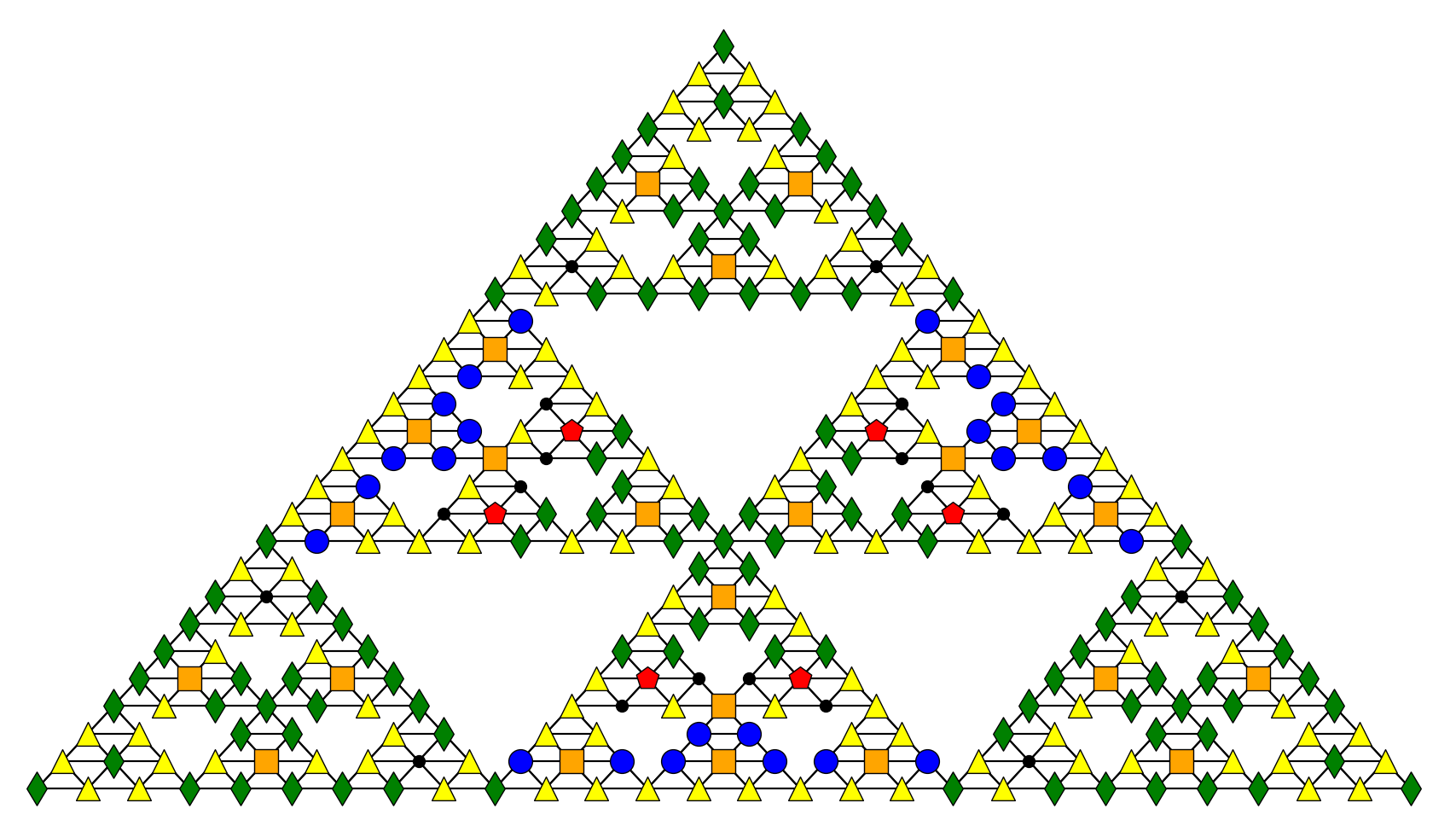}
	\caption{Identity element for~$MG$ level~$3$.}
	\label{fig:MG_Id}
\end{figure}

\begin{figure}[H]
	\centering
	\includegraphics[scale=0.3]{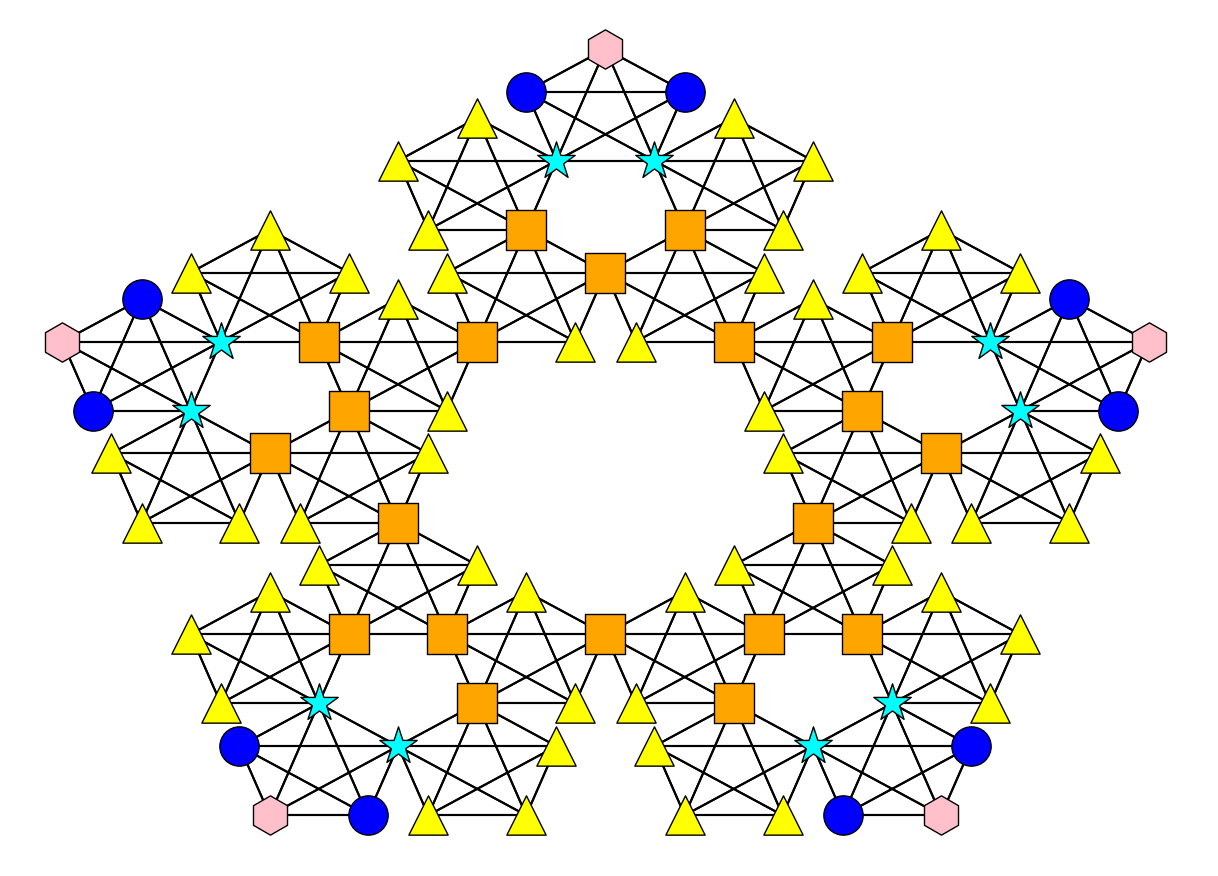}
	\caption{Identity element for~$PG$ level~$2$.}
	\label{fig:PG_Id}
\end{figure}

\begin{figure}[H]
	\centering
	\includegraphics[scale=0.2]{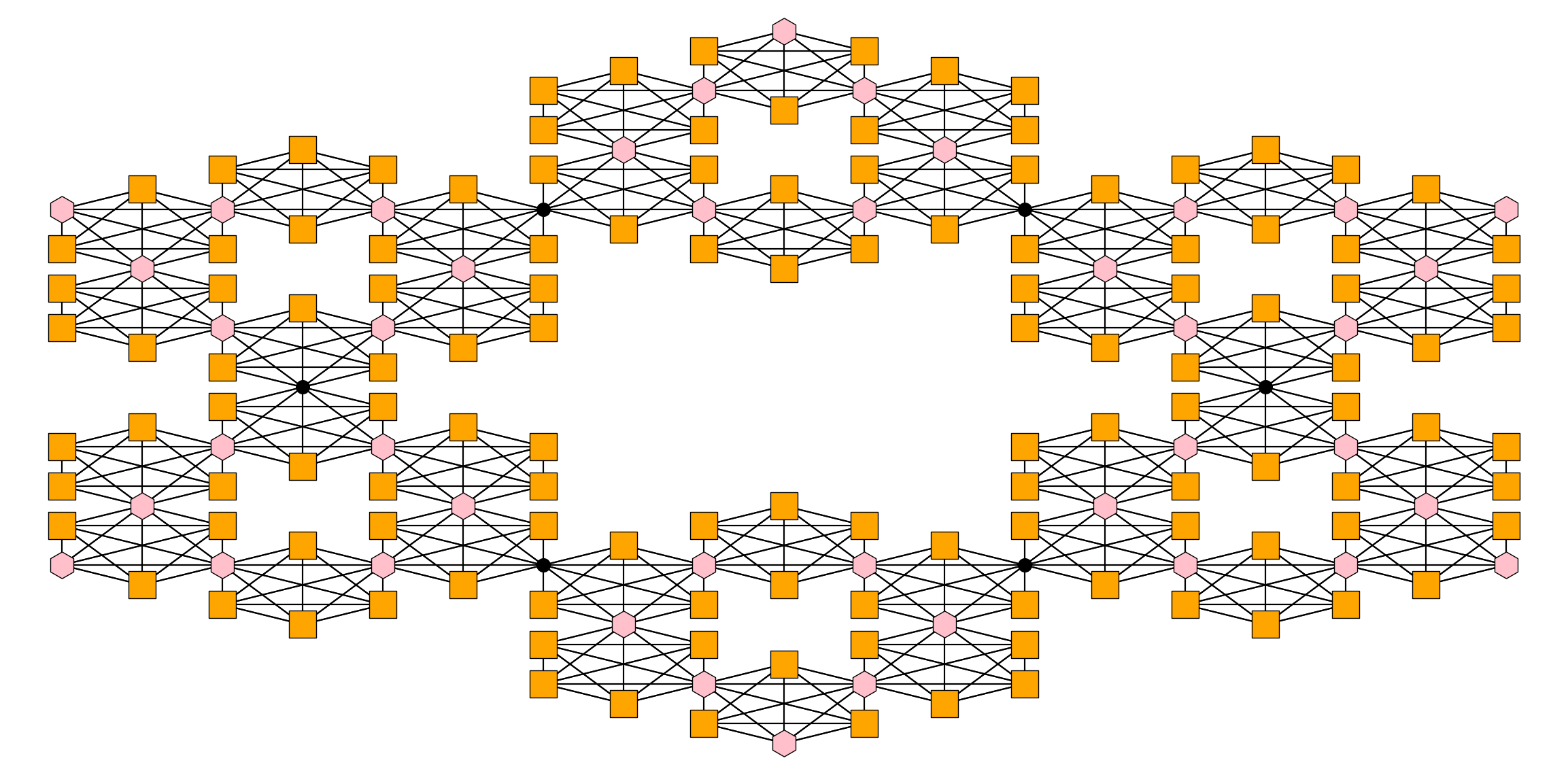}
	\caption{Identity element for~$HG$ level~$2$.}
	\label{fig:HG_Id}
\end{figure}

\section{Acknowledgements}
Ilse, Sam, and Rafael would like to acknowledge support by the National Science Foundation through the Research Experiences for Undergraduates Programs at Cornell, grant DMS-1156350.
Robert Strichartz would like to acknowledge support by the National Science Foundation, grant DMS-1162045.
We are all grateful to Lionel Levine for his many useful suggestions.

\nocite{*}
\bibliographystyle{ieeetr}
\bibliography{Sources}

\smallskip
\small
Websites: \url{http://www.math.cornell.edu/\~skayf/} and \url{http://www.math.cornell.edu/\~twestura/}

\end{document}